\documentclass[9pt]{amsart}

\usepackage{amsmath,amsthm,mathrsfs}
\usepackage{amssymb,amsfonts}
\usepackage{url,relsize,xcolor}
\usepackage[hidelinks]{hyperref}

\theoremstyle{plain}
\newtheorem{theorem*}{Theorem}
\newtheorem{theorem}{Theorem}[section]
\newtheorem{proposition}[theorem]{Proposition}
\newtheorem{lemma}[theorem]{Lemma}

\newtheorem{claim}[theorem]{Claim}

\numberwithin{equation}{section}

\theoremstyle{definition}
\newtheorem{definition}[theorem]{Definition}

\theoremstyle{remark}

\theoremstyle{theorem}

\newtheorem*{mainthm}{Theorem}

\newcommand{\phii}{\varphi}
\newcommand{\liftphi}{\Phi}
\newcommand{\bbN}{{\mathbb N}}

\newcommand{\cW}{{\mathcal W}}
\newcommand{\cF}{{\mathcal F}}
\newcommand{\cG}{{\mathcal G}}

\newcommand{\cP}{{\mathcal P}}

\newcommand{\cX}{{\mathcal X}}

\newcommand{\cB}{{\mathcal B}}
\newcommand{\cA}{{\mathcal A}}

\newcommand{\cJ}{{\mathcal J}}

\newcommand{\twoN}{\{0,1\}^\bbN}

\newcommand{\cstar}{$\mathrm{C^*}$}
\newcommand{\cPN}{\cP(\bbN)}

\newcommand{\cJprod}{\cJ_{\textrm{prod}}}

\DeclareMathOperator{\OCA}{{\mathsf {OCA}}}

\DeclareMathOperator{\OCAsharp}{{\mathsf {OCA}^\#}}

\DeclareMathOperator{\CH}{{\mathsf{CH}}}

\DeclareMathOperator{\MA}{{\mathsf {MA}_{\aleph_1}}}
\DeclareMathOperator{\MAsl}{{\mathsf {MA}_{\aleph_1}(\sigma\textrm{-linked})}}

\DeclareMathOperator{\Diff}{Diff}
\DeclareMathOperator{\Aut}{Aut}
\DeclareMathOperator{\dom}{dom}

\DeclareMathOperator{\Fin}{Fin}

%%%%%%%%%%%%%%%%%%%%

\newcounter{my_enumerate_counter}
\newcommand{\pushcounter}{\setcounter{my_enumerate_counter}{\value{enumi}}}
\newcommand{\popcounter}{\setcounter{enumi}{\value{my_enumerate_counter}}}

\usepackage{enumitem}

\title{A metric lifting theorem}

\author{Ben De Bondt}
\address[BDB]{Universität Münster\\
	Institut für Mathematische Logik und Grundlagenforschung\\ %-
%	Fachbereich Mathematik und Informatik\\ 
	Einsteinstr.\ 62\\
	48149 Münster,
	Germany}
\email{bdebondt@uni-muenster.de}
\urladdr{}
\thanks{BDB is funded by the Deutsche Forschungsgemeinschaft (DFG, German Research Foundation) under Germany's Excellence Strategy EXC 2044\,-390685587, Mathematics Münster: Dynamics--Geometry--Structure}

\author{Alessandro Vignati}
\address[AV]{
 Institut de Math\'ematiques de Jussieu - Paris Rive Gauche (IMJ-PRG)\\
 Universit\'e Paris Cit\'e and Institut Universitaire de France,
 B\^atiment Sophie Germain\\
 8 Place Aur\'elie Nemours \\ 75013 Paris, France}
\email{ale.vignati@gmail.com}
\urladdr{http://www.automorph.net/avignati}
\thanks{AV is supported by the Institut Universitaire de France}

\date{\today}

\begin{document}	

\begin{abstract}
In the recent article \cite{TrivIso}, Farah and the authors proved a strong lifting theorem for well-behaved maps between reduced products of discrete structures, under the assumption of fairly mild Forcing Axioms. In this note, we prove the analogue of this result in the metric setting. 
\end{abstract}
	
\maketitle
\section{Introduction}
If $(M_n,d_n)$ is a sequence of metric spaces, we consider the equivalence relation $\sim_{\Fin}$ on $\prod_n M_n$, given by $a\sim_{\Fin} b$ if and only if $\lim_n d_n(a_n,b_n)=0$. This equivalence relation corresponds in the metric setting to `eventual equality' in the discrete setting. Moreover, in case the spaces $M_n$ have uniformly bounded diameter, quotienting by this equivalence relation gives the reduced product metric space $\prod_n M_n/\Fin$, measuring asymptotic distances between sequences. 
Just as metric ultraproducts, such metric reduced products are useful in several applications, for example in \cstar-algebra theory (see \cite{EffrosRosenberg} or \cite{Farah.Between}). % or in the study of sofic and hyperlinear groups (see e.g.\ \cite{capraro2015introduction}). 
Reduced products can more generally serve as a tool to study approximability of certain larger objects by means of simpler finite or countable objects.

Our main object of study in the present note is maps between two such reduced products $\prod_nM_n/\Fin$ and $\prod_n N_n/\Fin$, where $(N_n,\partial_n)$ is a second sequence of metric spaces. 
A natural and straightforward way to construct such maps is as follows. Consider a bijection $f$ between two cofinite subsets of $\bbN$ (i.e., an \emph{almost permutation} of $\bbN$, which plays the role of shuffling the indices) and a sequence of maps $h_n\colon M_{f(n)}\to N_n$ for $n \in \bbN$. If the pair $(f,(h_n))$ respects asymptotic equality, meaning that whenever $a,b\in\prod_nM_n$ are such that $\lim_n d_n(a_n,b_n)=0$, we have that \[ \lim_n\partial_n(h_n(a_{f(n)}),h_n(b_{f(n)}))= 0,\]
the function $\Phi[f,(h_n)]\colon\prod_nM_n\to\prod_nN_n$ defined by sending $a = (a_n)\in \prod_nM_n$ to $b= (b_n) \in \prod_n N_n$ where $b_n = h_n(a_{f(n)})$ induces a map between the corresponding reduced products over $\Fin$. Maps $\Phi\colon\prod_nM_n\to\prod_nN_n$ of this form $\Phi = \Phi[f,(h_n)]$ are called \emph{of product form}, and if $\varphi\colon \prod_n M_n/\Fin\to\prod_nN_n/\Fin$ admits a lifting of product form, we say that $\varphi$ is \emph{trivial}. Here $\Phi$ is called a \emph{lifting} of $\varphi$ if for every $a\in \prod_nM$, the $\sim_{\Fin}$-equivalence class containing $a$ is mapped by $\phii$ to the class containing $\Phi(a)$.

A key feature of trivial maps from $\prod_nM_n / \Fin$ to $\prod_nN_n / \Fin$ is that they respect a family of equivalence relations induced by pseudometrics, denoted $d^S(.\,,.\,)$ indexed by $S \in \mathcal P(\mathbb N)/\Fin$ which generalise to the metric setting the equivalence relation given by `being eventually equal on indices in $S$'. Maps preserving these equivalence relations are called \emph{coordinate respecting}. In case all the structures under consideration are countable sets with the discrete metric, these notions agree with their discrete counterparts defined in \cite[\S2]{TrivIso}.

The fact that reduced products of metric structures are countably saturated, together with the fact that the space of theories in a given separable language is compact, gives that, under the Continuum Hypothesis $\CH$, there exist metric spaces $M_n$ and $N_n$ and coordinate respecting functions $\prod_nM_n/\Fin\to\prod_nN_n/\Fin$ which are not trivial (see, \cite[Proposition 6]{TrivIso} in case all structures are discrete, or \cite{ghasemi2014reduced} for a truly metric version of this fact). The following, our main result, shows this is not the case under suitable Forcing Axioms.

\begin{mainthm} \label{th.mainIntro}
Assume $\OCA$ and $\MAsl$. Then all coordinate respecting functions between reduced products of sequences of separable metric spaces with uniformly bounded diameter are trivial.
\end{mainthm}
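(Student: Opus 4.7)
\emph{Strategy.} The plan is to adapt the proof of the analogous discrete theorem of \cite{TrivIso} to the metric setting, preserving its two-stage structure: a local trivialisation on countable subsets of the domain via $\OCA$, followed by a gluing argument via $\MAsl$. We fix a coordinate respecting $\phii\colon \prod_n M_n/\Fin\to\prod_n N_n/\Fin$ and, using separability, countable dense subsets $D_n\subseteq M_n$. Because a product-form lifting is forced by coordinate respectingness to behave continuously with respect to the pseudometrics $d^S$, it will suffice to first produce a product-form lifting on the discretised reduced product $\prod_n D_n/\Fin$ and then to argue that the resulting $h_n\colon D_{f(n)}\to N_n$ are uniformly continuous on all but a $\Fin$-negligible set of indices, so they extend to the sought maps $\bar h_n\colon M_{f(n)}\to N_n$.

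\emph{Local trivialisations.} For each rational scale $\e>0$ and each countable $X\subseteq \prod_n D_n$, the aim is to extract from $\phii\rs X$ candidate partial maps $h_n^\e$, defined on finite subsets of $D_{f(n)}$, realising $\phii$ on $X$ up to asymptotic error $\e$. As in \cite{TrivIso}, this is done by designing an open colouring on pairs of relevant data, applying $\OCA$, and using coordinate respectingness together with countable saturation of $\prod_n N_n/\Fin$ to rule out the alternative producing an uncountable monochromatic set. The essential adaptation here is that the colouring is phrased in terms of the pseudometrics $d^S$ on the domain and their counterparts on the target, rather than exact equality of sequences; near-equality replaces equality throughout the definition of the bad pairs.

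\emph{Gluing and the main obstacle.} The local approximations are pasted together by a $\sigma$-linked forcing whose conditions encode finite pieces of an almost permutation $f$ of $\bbN$ together with compatible partial data $h_n^\e$ for finitely many scales $\e$. Applying $\MAsl$ to the family of dense sets ensuring that every countable $X$ is approximated at every rational scale $\e$ delivers a global $(f,(h_n))$ on the $D_n$ whose induced product-form map lifts $\phii$ on $\prod_n D_n/\Fin$; uniform continuity of each $h_n$ then yields the extension to $\prod_n M_n/\Fin$. The principal obstacle is the coherence of the $\e$-approximations across scales: the trivialisation obtained at a fixed $\e$ need not, a priori, refine the trivialisations at coarser scales, so the $\sigma$-linked poset must simultaneously encode consistency of $(h_n^\e)_\e$ as $\e$ ranges over a countable dense set of positive reals, together with the uniform modulus of continuity needed for the extension step. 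This scale-coherent gluing is the essentially new ingredient with no counterpart in the discrete argument of \cite{TrivIso}.
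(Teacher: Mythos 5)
Your outline diverges from the paper's proof in several respects, and at least one of the divergences is a genuine gap. First, the paper does not build the almost permutation $f$ inside a $\sigma$-linked forcing: it obtains $f$ for free from the already-known discrete rigidity theorem for $\Aut(\mathcal P(\bbN)/\Fin)$ under $\OCA$ (cited as \cite[Theorem 1]{TrivIso}), and then re-indexes so that $\alpha$ is the identity. Re-proving this inside a custom poset is not merely harder; it is essentially re-doing an existing theorem. Second, your ``scale-coherent gluing'' is not needed: the paper shows in Lemma~\ref{L.intersection} that $\cJprod=\bigcap_{\varepsilon>0}\cJprod^\varepsilon$ by an elementary diagonal argument (choose an increasing sequence $k_m$ along which the $1/m$-liftings agree up to $2/m$, and splice), with no forcing and no simultaneous encoding of all scales. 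Third, the paper uses $\MAsl$ only as a tool to refine an uncountable $K_0$-homogeneous set inside a single $\OCA$ colouring, not as the engine of the gluing.

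The genuine gap is in the local trivialisation step. You propose to run an $\OCA$ colouring directly on data drawn from $\prod_n D_n$ with countable $D_n$, ``phrased in terms of the pseudometrics.'' But the relevant colouring is open only when the factor spaces are \emph{finite} (the paper flags this explicitly: ``This is the main reason we assume each $M_n$ is finite\ldots{} this colouring is not open in case $M_n$ is not discrete''). With countable $D_n$ the colouring fails to be open, and $\OCA$ does not apply. The paper's fix is a two-layer finitisation: first prove the result for finite $M_n$ (Theorem~\ref{thm:mainfinite}, using the nonmeager ideal $\cJprod$, the partial-selector dichotomy of Appendix~\ref{Appendix}, and Lemma~\ref{L.Uniformization}); then, for countable $M_n$, slice each into finite initial segments $M_{n,f(n)}$ parametrised by $f\in\bbN^\bbN$, obtain a lifting $h_f$ on each $M_f$ from the finite case, prove coherence of the $h_f$, and run a \emph{second} $\OCA$ colouring on $[\bbN^\bbN]^2$ (together with $\mathfrak b>\aleph_1$ and $\sigma$-directedness of $(\bbN^\bbN,\leq_*)$) to paste the $h_f$ into a single $\varepsilon$-lifting. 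Your proposal collapses these two layers and so does not address the non-openness obstruction; the proposed $\sigma$-linked poset does not obviously circumvent it either, since the density requirements you describe would still have to cope with the same topological failure.
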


Here $\OCA$ is the Open Colouring Axiom, and $\MAsl$ is the restriction of $\MA$ to $\sigma$-linked partial orders. We refer the reader to \cite{farah2022corona} and \cite{TrivIso} for a thorough discussion on the influence of these axioms on rigidity of massive quotient structures.

The proof of the main result stays close to that of \cite[Theorem 7]{TrivIso}, with a few small technical detours needed to handle the presence of distances. 

First, we prove this result for sequences of finite metric spaces: there, having fixed a coordinate respecting function $\phii$, we show that (under Forcing Axioms) there is a nonmeager ideal $\cJprod\subseteq\mathcal P(\mathbb N)$ on whose sets $A \in \cJprod$  we can construct liftings of product form (i.e., induced by a sequence of maps $h_n$ for $n\in A$). We then uniformise all such maps. Secondly, we once again uniformise (once more using crucially $\OCA$) to show our rigidity results for all reduced products of uniformly bounded sequences of separable metric spaces.

\section{The main result}
Let $(M_n,d_n)$ be a sequence of separable metric spaces of uniformly bounded diameter.
The metric reduced product $M:=\prod_n M_n/\Fin$ is the metric space of $\sim_{\Fin}$-equivalence classes, where, for $a=(a_n)$ and $b=(b_n)$ in $\prod_nM_n$ we write 
\[
a \sim_{\Fin} b \text{ if and only if } \lim_{n} d_n(a_n,b_n)=0.
\] 
If $a=(a_n)\in\prod_nM_n$ we write $[a]$ for its class in $M$. $M$ is a metric space, where the distance is given by $d_M([a],[b])=\limsup_n d_n(a_n,b_n)$. Every infinite $S\subseteq\mathbb N$ induces a pseudometric $d^S$ on $M$ by setting, for $a=(a_n)$ and $b=(b_n)$ in $\prod_nM_n$,
\[
d^S([a],[b])=\limsup_{n\in S} d_n(a_n,b_n).
\]
If $S$ is finite, we simply set $d^S$ to be $0$. 
 Note that $d^\mathbb N$ is precisely the metric $d_M$. Further, notice that if $S$ and $S'$ have the same image in $\mathcal P(\bbN)/\Fin$, then $d^{S}$ and $d^{S'}$ coincide (if the diameters of the factor spaces of our reduced products are uniformly bounded away from zero, this implication is an equivalence). We can thus refer to, when convenient, the pseudometric $d^S$ for $S\in\mathcal P(\bbN)/\Fin$. If $a$ and $b$ are in $M$, we write $a=_Sb$ for $d^S(a,b)=0$. 

\begin{definition}
Let $(M_n,d_n)$ and $(N_n,\partial_n)$ be two sequences of separable metric spaces of uniformly bounded diameter. Let $M=\prod_nM_n/\Fin$ and $N=\prod_nN_n/\Fin$. A function
$\phii\colon M\to N$ is \emph{coordinate respecting} if there is $\alpha\in \Aut(\mathcal P(\bbN)/\Fin)$ such that for all $S\in\mathcal P(\bbN)/\Fin$ and all $a,b\in M$ we have that
\[
a=_Sb\Rightarrow \phii(a)=_{\alpha(S)}\phii(b).
\]
\end{definition}

As mentioned in the introduction, examples of these maps can be obtained starting from an almost permutation of $\mathbb N$ and applying coordinatewise maps~$h_n$. 
\begin{definition}
Let $(M_n,d_n)$ and $(N_n,\partial_n)$ be two sequences of separable metric spaces of uniformly bounded diameter. Let $M=\prod_nM_n/\Fin$ and $N=\prod_nN_n/\Fin$. A function
$\phii\colon M\to N$ is \emph{trivial} if there is $f$, an almost permutation of $\bbN$, and maps $h_n\colon M_{f(n)}\to N_n$ such that for all $a=(a_n)\in\prod_nM_n$ we have that
\[
\phii([a])=[b] \text{ where } b_n=h_n(a_{f(n)}).
\]
\end{definition}
To have that the function $a=(a_n)\mapsto (h_n(a_{f(n)}))$ is a well-defined map between $M$ and $N$ we must have that whenever $a,b\in\prod_nM_n$ are such that $[a]=[b]$ we have that $\lim_n\partial_n(h_n(a_{f(n)}),h_n(b_{f(n)}))= 0$, i.e., $[h_n(a_{f(n)})]=[h_n(b_{f(n)})]$. Clearly, every trivial map $\phii\colon \prod_nM_n/\Fin \to \prod_nN_n/\Fin$ is coordinate respecting. Also, as mentioned in the introduction, $\CH$ provides $2^{\mathfrak c}$ coordinate respecting nontrivial functions.
The following is our main result.
\begin{theorem}\label{th.main}
Assume $\OCA$ and $\MAsl$. Then all coordinate respecting functions between reduced products of sequences of separable metric spaces with uniformly bounded diameter are trivial.
\end{theorem}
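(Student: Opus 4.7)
The proof plan follows the blueprint of \cite[Theorem 7]{TrivIso}, with extra $\varepsilon$-bookkeeping to handle distances. I would proceed in two stages: first establish triviality when all factor spaces are finite, then bootstrap to the separable case by metric approximation.

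\emph{Stage 1 (finite factors).} Fix $\phii$ coordinate respecting with witness $\alpha \in \Aut(\cP(\bbN)/\Fin)$ lifted by an almost permutation $f$, and consider
\[
\cJprod := \{A \subseteq \bbN : \text{there exist } h_n\colon M_{f(n)} \to N_n,\ n \in A,\ \text{inducing } \phii \text{ on the $A$-coordinates}\}.
\]
Finiteness of the factors ensures that there are only finitely many candidates $h_n\colon M_{f(n)} \to N_n$ per coordinate, so at each single index the coordinate-respecting hypothesis quickly determines $h_n$; moreover, two local liftings on $A$ and $B$ must agree modulo $\Fin$ on $A \cap B$, which makes $\cJprod$ an ideal closed under finite unions. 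The main subclaim is then that $\cJprod$ is nonmeager in $\cP(\bbN)$. For this I would apply $\OCA$ to a colouring whose vertices are finite partial candidates $(F, (h_n)_{n \in F})$, with open colour declaring two candidates $\varepsilon$-incompatible whenever some $a \in \prod_n M_n$ witnesses that on $F \cap F'$ they produce $N$-distances exceeding $\varepsilon$ along too many coordinates. An uncountable $0$-homogeneous set conflicts with coordinate-respecting-ness as in \cite[\S5]{TrivIso}, while the countable $1$-homogeneous decomposition, combined with $\MAsl$, supplies nonmeagerly many $A \in \cJprod$. A further $\MAsl$ uniformisation then yields a cofinite $A \in \cJprod$, hence a global product-form lifting of $\phii$.

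\emph{Stage 2 (separable factors).} Choose finite $(1/k)$-nets $M_n^{(k)} \subseteq M_n$ and $N_n^{(k)} \subseteq N_n$ together with coordinatewise rounding maps, so that $\phii$ descends to coordinate respecting maps $\phii_k$ between the corresponding reduced products of nets. Stage 1 applied to each $\phii_k$ produces product-form liftings $\Phi_k = \Phi[f_k, (h_n^{(k)})]$; the common witnessing automorphism $\alpha$ forces all the $f_k$ to represent the same almost permutation $f$ modulo $\Fin$. A final $\OCA$/$\MAsl$ uniformisation, in the spirit of \cite[\S7]{TrivIso}, then selects a diagonal sequence of maps $h_n\colon M_{f(n)} \to N_n$ from the family $(h_n^{(k)})_k$ that lifts $\phii$ on the nose.

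The main obstacle is the Stage~1 nonmeagerness claim: the $\OCA$ colouring must be set up on a Polish space so that the open colour is genuinely open, and so that the uncountable-homogeneous alternative contradicts coordinate-respecting-ness rather than (as in \cite{TrivIso}) an isomorphism hypothesis. The quantitative nature of the metric obstructs this, since two candidates may disagree by arbitrarily small amounts without being truly incompatible; the colouring must therefore be indexed by a threshold $\varepsilon > 0$ and the full argument run uniformly over a dense sequence $\varepsilon_k \to 0$. A secondary difficulty is the diagonal extraction of Stage~2, where one must ensure sufficient $k$-stability of the $h_n^{(k)}$'s to pass to a genuine limit; continuity of $\phii$ with respect to all pseudometrics $d^S$ is what makes the extraction go through.
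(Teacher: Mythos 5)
Your two-stage blueprint (finite factors, then separable by approximation) matches the paper's high-level outline, but both stages contain genuine gaps. In Stage~2, your plan to take $(1/k)$-nets in \emph{both} $M_n$ and $N_n$ together with coordinatewise rounding does not go through: the rounding error on the $N$-side is a fixed $1/k$, uniform in the coordinate index $n$, so the rounded map $\phii_k$ is not well defined on the reduced product (two $\sim_\Fin$-equivalent inputs can have rounded outputs that stay $\sim 1/k$ apart for infinitely many $n$). Moreover, for a fixed sequence of $(1/k)$-nets $M_n^{(k)}$, the union $\bigcup_k\prod_n M_n^{(k)}$ does not cover $\prod_n M_n$ modulo $\Fin$, so there is no way to extract a global lift from the $\phii_k$'s. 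The paper instead keeps the codomain separable and reduces only the domain: fix a countable dense $D_n\subseteq M_n$, enumerate $D_n=\{x_{n,k}\}_k$, let $M_{n,m}=\{x_{n,i}\}_{i\le m}$, and for each $f\in\bbN^\bbN$ apply the finitary theorem to $\phii\restriction\prod_n M_{n,f(n)}/\Fin$; every element of $\prod_n D_n$ lies in some such slice, and the final uniformisation is an $\OCA$ colouring on $\bbN^\bbN$ exploiting $\mathfrak b>\aleph_1$ to get a $\leq_*$-cofinal $K_1$-homogeneous piece. This asymmetry (finite $M_n$, separable $N_n$) is what lets Stage~2 work at all, and it is absent from your plan.

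In Stage~1, your remark that ``finiteness of the factors ensures there are only finitely many candidates $h_n\colon M_{f(n)}\to N_n$'' is false precisely because the finitary theorem is stated with $N_n$ merely separable, so $N_n^{M_n}$ is generally infinite. Your proposed $\OCA$ colouring on ``finite partial candidates $(F,(h_n)_{n\in F})$'' is also the wrong space: the paper colours pairs from $\prod_n M_n\times\cA$ (for an uncountable almost disjoint family $\cA$), putting $\{(a,A),(b,B)\}\in K_0$ when some $n\in A\cap B$ has $a_n=b_n$ yet $\partial_n(\Phi(a)_n,\Phi(b)_n)>\varepsilon$; nonmeagerness of $\cJprod$ comes from showing $\cJprod$ intersects every such $\cA$. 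You also have the roles of $\OCA$ and $\MAsl$ partly reversed: the paper uses an $\MA$ argument \emph{inside} the no-uncountable-$K_0$-homogeneous-set claim, and the final uniformisation on $\cJprod$ is another $\OCA$ application (the metric partial-selector dichotomy, Lemma~\ref{lem:partialselmetric}), not an $\MAsl$ step. Finally, the entire argument depends on the scaffolding $\cJprod^\varepsilon$ for $\varepsilon>0$ and the identity $\cJprod=\bigcap_{\varepsilon>0}\cJprod^\varepsilon$ (Lemma~\ref{L.intersection}); your proposal gestures at ``$\varepsilon$-bookkeeping'' but never sets this up, and without it the passage from approximate liftings on sets in $\cJprod^\varepsilon$ to a genuine lifting on $\bbN$ has no foundation.
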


The rest of this article is dedicated to the proof of Theorem~\ref{th.main}. 
We fix:
\begin{itemize}
\item $(M_n,d_n)$ and $(N_n,\partial_n)$ separable metric spaces of uniformly bounded diameter. We let $M=\prod_nM_n/\Fin$ and $N=\prod_nN_n/\Fin$. We further let $d_M$ and $\partial_N$ be the metrics on $M$ and $N$. Similarly, we let $d^S$ and $\partial^S$, for $S\subseteq\mathbb N$, be the pseudometrics defined above on $M$ and $N$ respectively. 
\item $\varphi\colon M\to N$, a coordinate respecting function as witnessed by $\alpha\in\Aut(\cP(\bbN)/\Fin)$, and we let $\Phi\colon\prod_nM_n\to\prod_nN_n$ be a lifting for $\varphi$.
\end{itemize}

In the proof of Theorem~\ref{th.main}, it will suffice to concentrate on the case in which the automorphism $\alpha$ is the identity and prove that $\varphi$ has a lifting of product form. To this purpose, it is useful to consider the following sets $\cJprod^\varepsilon$. 
For $\varepsilon\geq 0$, define $\cJprod^\varepsilon$ as the set of all $A\subseteq\mathbb N$ for which there exists a sequence $(h^A_n)$ of maps $h^A_n\colon M_n\to N_n$ such that for all $a\in \prod_nM_n$ we have that
\[
\limsup_{n\in A}\partial_n(\Phi(a)_n,h_n(a_n))\leq\varepsilon.
\]
We call such a sequence $(h_n^A)_n$ an $\varepsilon$-lifting of product form on $A$. 
Writing $\cJprod$ for $\cJprod^0$, we notice that, when $\alpha$ is the identity, $\phii$ is trivial precisely when there is a lifting of product form on $\bbN$, i.e., when $\bbN\in\cJprod$.

\begin{lemma}\label{L.intersection}
If $\alpha$ is the identity, each $\cJprod^\varepsilon$ is an ideal containing all finite sets, and 
\[
\cJprod=\bigcap_{\varepsilon>0}\cJprod^\varepsilon.
\]
\end{lemma}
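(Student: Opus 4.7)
The lemma has two parts: each $\cJprod^\varepsilon$ is an ideal containing all finite subsets of $\bbN$, and $\cJprod = \bigcap_{\varepsilon > 0} \cJprod^\varepsilon$. The first part is direct bookkeeping from the definitions; the second contains the real content.

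For the ideal structure, finite sets lie in $\cJprod^\varepsilon$ trivially because $\limsup$ over a finite index set is $0$ by convention, so any sequence of maps is a witness. Downward closure follows from monotonicity of $\limsup$: any witness $(h_n^A)$ for $A$ also witnesses $B$ whenever $B \subseteq A$. For binary unions, given witnesses $(h_n^A), (h_n^B)$ for $A, B \in \cJprod^\varepsilon$, set $h_n := h_n^A$ on $A$ and $h_n := h_n^B$ on $B \setminus A$; the identity $\limsup_{n \in A \cup B} x_n = \max(\limsup_{n \in A} x_n, \limsup_{n \in B \setminus A} x_n)$, valid for nonnegative $(x_n)$, then yields $A \cup B \in \cJprod^\varepsilon$.

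The inclusion $\cJprod \subseteq \bigcap_{\varepsilon > 0} \cJprod^\varepsilon$ is immediate. For the reverse, fix $A \in \bigcap_{k \geq 1} \cJprod^{1/k}$ with witnesses $(h_n^{(k)})$, and aim to construct a single $(h_n)$ satisfying $\limsup_{n \in A} \partial_n(\Phi(a)_n, h_n(a_n)) = 0$ for every $a \in \prod_n M_n$. My plan is a diagonal construction: pick an increasing sequence $N_0 < N_1 < \cdots$, partition $A$ into consecutive blocks $B_k := A \cap [N_{k-1}, N_k)$, and set $h_n := h_n^{(k)}$ for $n \in B_k$. For each fixed $a$, the exceptional set $F_k(a) := \{n \in A : \partial_n(\Phi(a)_n, h_n^{(k)}(a_n)) > 2/k\}$ is finite; if $N_{k-1} > \max F_k(a)$ for every $k$, then the $\sup$ of the error on $B_k$ is at most $2/k$, whence $\limsup_{n \in A} \partial_n(\Phi(a)_n, h_n(a_n)) = 0$.

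The main obstacle is that $F_k(a)$ depends on $a$ and cannot be uniformly bounded as $a$ varies, so a single sequence $(N_k)$ cannot dominate all exceptional sets at once. To address this I would exploit separability of the $M_n$: enumerate a countable dense $(a^{(p)})_p \subseteq \prod_n M_n$ and inductively pick $N_k > \max_{p \leq k+1} \max F_{k+1}(a^{(p)})$, yielding $\limsup = 0$ along the dense sequence. The passage to arbitrary $a$ is the hard part: one would bound $\partial_n(\Phi(a)_n, h_n(a_n))$ by triangle inequality through a coordinatewise-close dense approximant $a^{(p)}$, using the coordinate respecting property of $\varphi$ to control $\partial_n(\Phi(a)_n, \Phi(a^{(p)})_n)$ and requiring the $(h_n^{(k)})$ to be compatible with the metric structure (for instance, chosen as continuous representatives) to control $\partial_n(h_n(a_n), h_n(a^{(p)}_n))$. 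This metric-compatibility step is the most delicate piece of the argument and is where the proof must depart from the discrete setting of the companion paper.
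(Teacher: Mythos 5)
The ideal-closure part of your proof is fine and matches the paper. The gap is in the reverse inclusion $\bigcap_{\varepsilon>0}\cJprod^\varepsilon\subseteq\cJprod$, and it is a genuine one: your proposed fix for the ``hard part'' (pass from a dense family of $a$'s to all $a$ via a coordinatewise-close approximant and a continuity assumption on the $h_n^{(k)}$) does not work. First, there is no countable set $D\subseteq\prod_nM_n$ that is ``coordinatewise close'' to every $a$ in a uniform-in-$n$ sense --- already for $M_n=\{0,1\}$ discrete, $\prod_nM_n$ in the sup metric is nonseparable, so the approximant you need need not exist. Second, the witnessing maps $h_n^{(k)}\colon M_n\to N_n$ are arbitrary and there is no license to take them continuous, so the term $\partial_n(h_n(a_n),h_n(a_n^{(p)}))$ simply cannot be controlled. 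You correctly identify this as the delicate step, but the step does not go through, and it is not where the paper's proof departs from the discrete case.

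The paper avoids the obstacle entirely by turning the quantifier over $a$ from an enemy into a tool. Since $\Phi$ lifts $\phii$, the triangle inequality through $\Phi(a)$ gives, for every $a$ and all $m,m'\geq 1$,
\[
\partial^A\bigl([h_n^m(a_n)],[h_n^{m'}(a_n)]\bigr)\leq\tfrac1m+\tfrac1{m'}.
\]
Because this holds for \emph{every} $a\in\prod_nM_n$, one may, for fixed $i,j$ and $\delta>0$, choose $a_n$ at each $n$ to nearly realise $\sup_{x\in M_n}\partial_n(h_n^i(x),h_n^j(x))$; the displayed bound then forces the set of $n\in A$ where this sup exceeds $\tfrac1i+\tfrac1j+\delta$ to be finite. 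This uniform-over-$x$ coherence of the $h_n^m$ for large $n$ (which uses no continuity and no countable dense set of sequences, only the freedom to vary $a$) is exactly what lets one choose thresholds $k_1<k_2<\cdots$ and patch the single sequence $h_n:=h_n^m$ for $n\in[k_m,k_{m+1})$, which then witnesses $A\in\cJprod$. Your diagonal-over-blocks patching idea is in the right spirit, but the way the blocks are obtained has to come from this ``choose a bad $a$'' argument, not from density plus continuity.
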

\begin{proof}
It is clear that each finite set is in $\cJprod^\varepsilon$, as $\partial^A$ is $0$ whenever $A$ is finite. Fix $\varepsilon\geq 0$, and assume that $A,B\in\cJprod^\varepsilon$, as witnessed by $(h_n^A)$ and $(h_n^B)$. Setting $h_n^{A\cup B}$ to be equal to $h_n^A$ if $n\in A$ and $h^B_n$ otherwise witnesses that $A\cup B\in\cJprod^\varepsilon$.

As the inclusion $\cJprod\subseteq \bigcap_{\varepsilon>0}\cJprod^\varepsilon$ is obvious (if $\delta<\varepsilon$ then $\cJprod^\delta\subseteq\cJprod^\varepsilon$), we are left to prove that all $A\in \bigcap_{\varepsilon>0}\cJprod^\varepsilon$ belong to $\cJprod$. Fix such an $A$, and let $(h_n^m)_{n,m\in\bbN}$ be maps where $h_n^m\colon M_n\to N_n$ and such that for every $m\geq 1$ the sequence $(h_{n}^m)_n$ is a $1/m$-lifting of product form on $A$.

Since pseudometrics respect the triangle inequality, for every $a=(a_n)\in \prod_nM_n$ and nonzero natural numbers $m$ and $m'$ we have that
\[
\partial^A([h_{n}^m(a_n)],[h_{n}^{m'}(a_n)])\leq \frac{1}{m}+\frac{1}{m'}.
\]
In particular, we can find an increasing sequence of natural numbers $k_m$, for $m\geq 1$ such that for all $1 \leq i,j\leq m$ and $n\geq k_m$ we have that
\[
\sup_{x\in M_n}\partial_n(h_n^i(x),h_n^j(x))\leq\frac{1}{i}+\frac{1}{j}.
\]
Let $k_{0}=0$ and $h_{n}^0\colon M_n\to N_n$ be any map. Setting $h_n=h_n^m$ whenever $n\in [k_m,k_{m+1})$ gives a sequence which witnesses that $A\in\cJprod$.
\end{proof}

\subsection{The finitary version}
We first prove the finitary version of our main result. In this section, we always work under the hypotheses of the following theorem.
\begin{theorem}\label{thm:mainfinite}
Assume $\OCA$ and $\MAsl$. Let $(M_n,d_n)$ be finite metric spaces of uniformly bounded diameter, and let $(N_n,\partial_n)$ be separable metric spaces. Let $M=\prod_nM_n/\Fin$ and $N=\prod_nN_n/\Fin$. Suppose that $\phii\colon M\to N$ is coordinate respecting. Then $\phii$ is trivial.
\end{theorem}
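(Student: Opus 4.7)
Plan. Following the blueprint of \cite[Theorem 7]{TrivIso}, the proof proceeds by reducing to the case $\alpha = \id$ and then showing $\bbN \in \cJprod$ via Lemma \ref{L.intersection}; for this it suffices to prove $\bbN \in \cJprod^\varepsilon$ for every fixed $\varepsilon > 0$. The reduction $\alpha = \id$ is standard: under $\OCA + \MAsl$ every automorphism of $\mathcal P(\bbN)/\Fin$ is induced by an almost permutation $g$, and precomposing $\phii$ with the bijection on reduced products induced by $g$ yields a coordinate respecting function with $\alpha = \id$ whose triviality is equivalent to that of $\phii$.

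For the $\MAsl$-step, fix $\varepsilon > 0$. I would show that $\cJprod^\varepsilon$ is dense in $\mathcal P(\bbN)/\Fin$ (equivalently, nonmeager as a subset of $2^\bbN$). Given an infinite $A \subseteq \bbN$, I would apply $\MAsl$ to a $\sigma$-linked poset $\bbP_A$ whose conditions are finite tuples $(h_n)_{n \in F}$, with $F \subseteq A$ finite and each $h_n : M_n \to N_n$ taking values in a fixed countable dense subset of $N_n$, together with a finite ``test family'' $\cF \subseteq \prod_n M_n$ certifying $\partial_n(\Phi(a)_n, h_n(a_n)) \leq \varepsilon/2$ for $a \in \cF$ and $n \in F$. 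Finiteness of each $M_n$ makes every $h_n$ a finite object, so $\sigma$-linkedness comes from grouping by the already-chosen $h_n$-values. Density of the sets extending $F$ to handle any fixed $a \in \prod_n M_n$ reduces to the claim that $\Phi(a)_n$ is asymptotically an $\varepsilon$-function of $a_n$ alone for $n \in A$; this follows from the coordinate respecting property applied to perturbations of $a$ on finite sets, once again using finiteness of $M_n$ to handle all values $a_n \in M_n$ simultaneously. $\MAsl$ then produces an $\varepsilon$-lifting of product form on an infinite subset of $A$, so $\cJprod^\varepsilon$ is dense.

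For the $\OCA$-step, I would uniformize the local $\varepsilon$-liftings of product form into a single global one. Following the discrete blueprint, I would consider pairs of incompatible local $\varepsilon$-liftings (say, $(h_n)_{n\in A}$ and $(h'_n)_{n\in B}$ for which some $n \in A \cap B$ witnesses, on a chosen test element, a disagreement exceeding $3\varepsilon$) and equip the set of such pairs with the Polish topology induced by the finite data. Applying $\OCA$ to this open coloring either gives an uncountable $0$-homogeneous family — incompatible with separability of the $N_n$ and finiteness of the $M_n$ — or a countable $1$-homogeneous decomposition. The latter, combined with a further $\MAsl$ application that picks coherent representatives in each class, produces a global sequence $(h_n)_{n \in \bbN}$ witnessing $\bbN \in \cJprod^\varepsilon$. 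Since $\varepsilon$ was arbitrary, Lemma \ref{L.intersection} yields $\bbN \in \cJprod$, so $\phii$ is trivial.

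The main obstacle is the $\OCA$-uniformization: formulating incompatibility as a genuinely open condition, robust under $\varepsilon$-approximation, requires careful bookkeeping of how errors propagate across the finite data determining each condition. Finiteness of the $M_n$ is crucial here, as it keeps each local $\varepsilon$-lifting a finite object up to the relevant scale, bringing the open coloring and its consequences into the regime where $\OCA$ applies. This also explains the architecture of the paper: Theorem \ref{thm:mainfinite} is proved first, and the arbitrary separable $M_n$ case is then bootstrapped by a second application of $\OCA$ to uniformize across a countable dense subfamily.
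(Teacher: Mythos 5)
Your high-level architecture (reduce to $\alpha = \id$; show $\cJprod^\varepsilon$ is large; uniformize) matches the paper, but the first substantive step has a real gap. You propose to prove density/nonmeagerness of $\cJprod^\varepsilon$ by forcing with a $\sigma$-linked poset of finite partial product maps together with finite test families, and you reduce the density of the relevant dense sets to the assertion that ``$\Phi(a)_n$ is asymptotically an $\varepsilon$-function of $a_n$ alone for $n\in A$,'' which you say ``follows from the coordinate respecting property applied to perturbations of $a$ on finite sets.'' That implication is false: the pseudometrics $d^S$ are oblivious to finite perturbations, so coordinate respecting gives no information at all from changing $a$ on a finite set. The assertion that $\Phi(a)_n$ is (approximately, on an infinite coordinate set) a function of $a_n$ alone is precisely the local-determination content that the whole theorem is about, and it is exactly what fails for the nontrivial coordinate-respecting maps that $\CH$ produces. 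The paper obtains it not from $\MAsl$ but from $\OCA$: it colors pairs from $\prod_n M_n \times \cA$ (with $\cA$ an uncountable almost disjoint family) by whether two sequences agreeing at some $n\in A\cap B$ have $\Phi$-images more than $\varepsilon$ apart there, shows via coordinate respecting (plus an $\MAsl$ shrinking to a pseudo-atomic set) that there is no uncountable $0$-homogeneous set, and then extracts local liftings from the countable $1$-homogeneous decomposition. Your poset is also the wrong tool for another reason: even if the dense sets $D_a$ existed for each $a$, there are $\mathfrak c$ of them and $\MA_{\aleph_1}$ only meets $\aleph_1$ many, so you would get a lifting that works on $\aleph_1$ test sequences, not all of $\prod_n M_n$.

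Your $\OCA$-uniformization sketch is closer in spirit to the paper, which proves a coherence claim, applies a metric partial-selector dichotomy, and then iterates a Baire-category lemma (Lemma \ref{L.Uniformization}) on the nonmeager piece. But even here, the proposed coloring on ``pairs of incompatible local $\varepsilon$-liftings'' and the ``further $\MAsl$ application that picks coherent representatives'' are not argued, and the paper's corresponding step uses no $\MAsl$ at all --- it proceeds via nonmeagerness of a single $\cG_{\bar n}$ and a finitely-terminating iteration of the uniformization lemma. As written, the proposal presupposes the key fact it needs and is not a proof.
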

As $\OCA$ is assumed, by \cite[Theorem 1]{TrivIso}, all automorphisms of $\mathcal P(\bbN)/\Fin$ are induced by an almost permutation of $\mathbb N$, and so is the automorphism $\alpha=\alpha(\phii)$ given by the definition of coordinate respecting. Let $f$ be the almost permutation inducing~$\alpha$. Using $f^{-1}$ to re-index the domain does not change whether $\phii$ is trivial or not; we can thus suppose that $\alpha$ is the identity.
\begin{proposition}
Assume $\OCA$ and $\MAsl$. Then $\cJprod$ intersects all uncountable almost disjoint families, and so it is nonmeager.
\end{proposition}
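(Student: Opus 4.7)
The second clause, that intersecting every uncountable almost disjoint family implies nonmeagerness, is the classical Jalali-Naini–Talagrand dichotomy: were $\cJprod$ meager, one could fix an interval partition $\bbN=\bigsqcup_n I_n$ such that every set in $\cJprod$ meets only finitely many $I_n$; lifting any uncountable almost disjoint family on $\bbN$ through this partition would then yield an uncountable almost disjoint family disjoint from $\cJprod$, contradicting the first clause. So the real work is to prove that $\cJprod$ meets every uncountable almost disjoint family $\mathcal{A}=\{A_\xi:\xi<\omega_1\}$.

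By Lemma~\ref{L.intersection}, it suffices, for each $\varepsilon>0$, to find a way to force membership of $A_\xi$'s in $\cJprod^\varepsilon$, and then to diagonalise over $\varepsilon=1/m$. Fix $\varepsilon>0$, and fix a countable dense subset $D_n\subseteq N_n$ in each coordinate. I would define a poset $\bbP_\varepsilon$ whose conditions are triples $p=(F_p,(h^p_n)_{n\in F_p},K_p)$ with $F_p\subseteq\bbN$ finite, $h^p_n\colon M_n\to D_n$, and $K_p\subseteq\prod_n M_n$ a finite "promise set" satisfying $\partial_n(\Phi(a)_n,h^p_n(a_n))\leq\varepsilon$ for every $n\in F_p$ and $a\in K_p$. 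Extension is "extend $F$, extend $K$, never alter committed values of $h_n$, keep the $\varepsilon$-inequality on the new $F$-indices for the old promises". Because $M_n$ is finite and the $h^p_n$ take values in countable sets $D_n$, the finite data $(F_p,(h^p_n)_{n\in F_p},K_p\!\upharpoonright\!F_p)$ has only countably many isomorphism types, and two conditions with the same such type amalgamate trivially, so $\bbP_\varepsilon$ is $\sigma$-linked. The basic dense sets $\{p:a\in K_p\}$ (for $a$ ranging over $\prod_n M_n$, which, since each $M_n$ is finite, has a countable dense subfamily suitable for driving the $\limsup$-bound) and $\{p:F_p\not\subseteq[0,k]\}$ produce, via $\MAsl$, an infinite $B\subseteq\bbN$ and maps $(h_n)_{n\in B}$ witnessing $B\in\cJprod^\varepsilon$.

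The delicate point — and the main obstacle — is arranging that the generic $B$ is almost contained in some $A_\xi\in\mathcal{A}$. The natural way is to pre-select, using $\OCA$, an uncountable subfamily $\mathcal{A}'\subseteq\mathcal{A}$ on which approximate $\varepsilon$-liftings cohere, by applying the Open Colouring Axiom to the set of finite $\varepsilon$-approximations produced above: colour a pair of such approximations red if they force conflicting values (i.e.\ cannot be amalgamated even modulo $2\varepsilon$) and blue otherwise; a red uncountable clique would contradict the coordinate-respecting property of $\varphi$ together with $\alpha=\mathrm{id}$, so one obtains a countable blue cover, and within a single blue piece one can add the dense sets $\{p:F_p\subseteq^* A_\xi\}$ for $\xi$ in an uncountable subset of $\mathcal{A}'$ while retaining $\sigma$-linkedness. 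The $\MAsl$-generic then returns an $A_{\xi^*}\in\mathcal{A}$ with $A_{\xi^*}\in\cJprod^\varepsilon$.

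Finally, to combine across $\varepsilon=1/m$, I would iterate the construction, at stage $m$ passing to the still-uncountable subfamily of $\mathcal{A}$ consisting of $A_\xi\in\cJprod^{1/m}$ (nonempty at every $m$ by the above, and in fact cofinal by a second application of Jalali-Naini–Talagrand together with the ideal property of $\cJprod^{1/m}$); an $\omega_1$-length pigeonhole produces a single $A_\xi$ lying in $\cJprod^{1/m}$ for every $m$, hence in $\cJprod$. The principal technical hurdle throughout is the coherence step: verifying that the pairwise colouring on partial $\varepsilon$-liftings is genuinely open in the product Polish topology inherited from the $D_n$'s, and that the red-homogeneous alternative of $\OCA$ contradicts the existence of $\varphi$.
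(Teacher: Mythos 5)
Your approach is genuinely different from the paper's, but it has two gaps that I do not think can be repaired.

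The paper does not force an approximate lifting directly. It colours the \emph{points} of the Polish space $\cX=\prod_n M_n\times\cA$ (via the embedding $(a,A)\mapsto(a,A,\Phi(a))$), puts $\{(a,A),(b,B)\}\in K_0$ iff there is $n\in A\cap B$ with $a_n=b_n$ but $\partial_n(\Phi(a)_n,\Phi(b)_n)>\varepsilon$, uses $\MAsl$ only as a subordinate tool inside the claim that there is no uncountable $K_0$-homogeneous set (to thin $H$ so all coordinates lie in a two-element set $\{s^0_n,s^1_n\}$), and then applies $\OCA$ to obtain a cover $\cX=\bigcup_k\cX_k$ by $K_1$-homogeneous pieces. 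Countable dense subsets $D_k\subseteq\cX_k$ are then used to read off maps $h_n^k$, and the set $\cA_{\mathrm{bad}}$ of $A$'s appearing in some $D_k$ is countable, so all but countably many $A\in\cA$ land in $\cJprod^{2\varepsilon}$ at once. This last point is what makes the diagonalisation over $\varepsilon=1/m$ work cleanly.

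Your proposal inverts the roles: $\MAsl$ is the engine and $\OCA$ plays a supporting coherence role. This cannot work as stated for a cardinality reason. Under $\OCA+\MAsl$ one has $\mathfrak c=\aleph_2$, and $\MAsl$ only meets $\aleph_1$ dense sets, so you cannot put all $a\in\prod_n M_n$ into promise sets $K_p$. Your attempt to repair this by invoking ``a countable dense subfamily suitable for driving the $\limsup$-bound'' does not hold up: the bound $\limsup_{n\in B}\partial_n(\Phi(a)_n,h_n(a_n))\leq\varepsilon$ is not a topologically continuous constraint in $a$, because $\Phi$ is an arbitrary lift (in particular not assumed continuous), so density of a countable family in $\prod_n M_n$ in the product topology says nothing about the $\limsup$ at the missing $a$'s. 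The paper handles all $a$ precisely because the $\OCA$ cover $\bigcup_k\cX_k$ is exhaustive and the verification (Claim~\ref{cl:claim prod}) goes by contradiction for an arbitrary $a$, using $K_1$-homogeneity of whichever $\cX_k$ contains $(a,A)$; nothing of that kind is available to the generic filter argument.

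Separately, the mechanism by which you extract an actual $A_{\xi^*}\in\cA$ lying in $\cJprod^\varepsilon$ is not sound. Meeting dense sets to make the generic set $B$ almost contained in a single $A_\xi$ would at best show $B\in\cJprod^\varepsilon$ for some infinite $B\subseteq^* A_\xi$; but $\cJprod^\varepsilon$ is an ideal, hence closed downward, and $B\subseteq A_\xi$ together with $B\in\cJprod^\varepsilon$ does not give $A_\xi\in\cJprod^\varepsilon$. (Also the sets $\{p:F_p\subseteq^* A_\xi\}$ are vacuously dense since $F_p$ is finite, and one cannot simultaneously force $B\subseteq^* A_\xi$ for uncountably many pairwise almost disjoint $A_\xi$.) Your closing diagonalisation step over $\varepsilon=1/m$ is fine once one knows each $\cJprod^{1/m}$ meets every uncountable a.d.\ family (that makes $\cA\setminus\cJprod^{1/m}$ countable), but the core step producing that intersection is the part that needs the paper's $\OCA$-first argument, not an $\MAsl$-first one.
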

\begin{proof}
 Let $\cA \subseteq \mathcal P(\bbN)$ be an uncountable almost disjoint family. We first prove that $\cJprod^{\varepsilon}$ intersects $\cA$ for any given $\varepsilon>0$. 

Fix $\varepsilon>0$, and recall that $\liftphi\colon\prod_nM_n\to\prod_nN_n$ is a fixed lifting of $\phii$. Define $\mathcal{X}=\prod_nM_n\times\cA$. 
Colour unordered pairs with elements in $\mathcal X$ by $\{(a,A),(b,B)\} \in K_0$ if and only if
\begin{enumerate}
	\item $A\neq B$ and 
	\item there is $n\in A\cap B$ such that $a_n=b_n$ yet $\partial_n(\Phi(a)_n,\Phi(b)_n) > \varepsilon$.
\end{enumerate}
Note that $K_0$ is an open subset of $[\cX]^2$ in the separable metrizable topology obtained when $\cX$ is viewed as a subset of $\prod_nM_n\times \cPN\times \prod_n N_n$, via the identification $(a,A) \mapsto (a,A,\Phi(a))$. (This is the main reason we assume each $M_n$ is finite, to avoid further complications in the definition of $K_0$, as this colouring is not open in case $M_n$ is not discrete.)

\begin{claim}
There exists no uncountable $K_0$-homogeneous subset of~$\cX$.
\end{claim}
\begin{proof} 
We argue by contradiction. Let $H\subseteq\cX$ be an uncountable $K_0$-homogeneous set. By a standard Martin's Axiom argument (see, for example, the beginning of the proof of \cite[Proposition 5.2]{TrivIso} which translates readily to our setting), we can shrink $H$ to an uncountable subset and assume that there exist $s^0,s^1\in\prod_n M_n$ such that 
\[ 
\forall (a,A)\in H\,\, \forall n\in A \,\,\,\,\,a_n\in\{s^0_n,s^1_n\}. 
\]
Fix $(a,A)\in H$. For $i=0,1$, let $I_a^i=\{n\in A\mid a_n=s_n^i\}$. Since $[a]=_{I_a^i}[s^i]$, then $[\liftphi(a)]=_{I_a^i}[\liftphi(s^i)]$, so we can find $n_a$ such that for each $i=0,1$, if $n\geq n_a$ and $n\in I_{a}^i$ then 
\[
\partial_n(\liftphi(a)_n,\liftphi(s^i)_n)\leq\varepsilon/2.
\]
As $H$ is uncountable, we can shrink it further and assume $n_a=n_b$ whenever $(a,A)$ and $(b,B)$ are in $H$. Further, by shrinking $H$ again, since each $N_n$ is separable, we can assume that whenever $(a,A)$ and $(b,B)$ belong to $H$ and $n\leq n_a$ then 
\[
\partial_n(\liftphi(a)_n,\liftphi(b)_n)\leq\varepsilon/2.
\]
This is the last refinement we need: pick distinct $(a,A)$ and $(b,B)$ in $H$, and $n$ which witnesses that $\{ (a,A),(b,B) \}$ satisfies the second requirement in the definition of $K_0$. Let $i$ be such that $n\in I_a^i$ (so that $n\in I_b^i$, as $a_n=b_n$). If $n\leq n_a$, then $\partial_n(\liftphi(a)_n,\liftphi(b)_n)\leq\varepsilon/2$, and if $n>n_a$ then
\[
 \partial_n(\liftphi(a)_n,\liftphi(b)_n)\leq \partial_n(\liftphi(a)_n,\liftphi(s^i)_n)+\partial_n(\liftphi(b)_n,\liftphi(s^i)_n)\leq\varepsilon.
 \]
 This is a contradiction.
\end{proof}
By applying $ \OCA$, let $\mathcal{X}=\bigcup_{n\in\bbN} \mathcal X_n$, where every $\mathcal X_n$ is $K_1$-homogeneous. For each~$n$ fix a dense countable subset $D_n$ of $\mathcal X_n$.
For every two natural numbers $k$ and $n$, pick a map $h_{n}^k\colon M_n\to N_n$ with the property that, for every $z\in M_n$,
\begin{center}\begin{minipage}{0.85\textwidth}
		if there exists $(b,B)\in X_k$ with $n\in B$ and $b_n=z$, then there exists $(b,B)\in D_k$ with $n\in B$, $b_n=z$ and $\partial_n(\liftphi(b)_n,h_{n}^k(z))<\varepsilon$.
\end{minipage} \end{center}
The construction of $h_{n}^k$ is possible by density of $D_k$ in $\mathcal X_k$ (again, since each $M_n$ is finite, we can require that $b_n$ actually equals $z$).
Define 
\[
\mathcal{A}_{\text{bad}}=\{ A\in\mathcal{A}: \exists a\ ( (a,A)\in\bigcup D_k )\}.
\]
\begin{claim}\label{cl:claim prod}
If $A\in \mathcal A\setminus \mathcal{A}_{\text{bad}}$ then there is $k$ such that for every $a=(a_n)\in \prod_{n} M_n$:
	\[
\partial^A (\phii([a]),[h_{n}^k(a_n)])\leq2\varepsilon.
	\]
	In particular, $A\in\cJprod^{2\varepsilon}$.
\end{claim}

\begin{proof}
	Suppose otherwise, and fix $A\in \mathcal{A}\setminus \mathcal{A}_{bad}$, $\delta > 0$ and elements $a(k)$ in $\prod_n M_n$ such that for every $k$
	\[
	Z_k=\{ n\in A: \partial_n(\liftphi(a(k))_n, h_n^k(a(k)_n)) > 2\varepsilon + \delta \}
	\]
	is infinite.
This is equivalent to the negation of the thesis since $\liftphi$ is a lift of $\phii$.

Let $Y_k\subseteq Z_k$ be infinite and pairwise disjoint, and fix $a\in\prod_n M_n$ such that $a$ equals $a(k)$ on $Y_k$, for every $k$. 
Let $k$ be such that $(a,A)\in \mathcal X_k$. Since $[a]=_{Y_k}[a(k)]$, then $[\liftphi(a)]=_{Y_k}[\liftphi(a(k))]$, hence there is $m$ such that if $n\in Y_k\setminus m$ then $\partial_n(\liftphi(a)_n,\liftphi(a(k))_n)<\delta$. 
Fix $n\in Y_k\setminus m$. Pick, by 
choice of the map $h^k_n$, an element $(b,B)\in D_k$ such that $b_n=a_n$, $n \in B$ and $\partial_n(\liftphi(b)_n,h_{n}^k(a_n))<\varepsilon$. Since $\mathcal X_k$ is $K_1$-homogeneous, we have that $\partial_n(\liftphi(b)_n,\liftphi(a)_n)<\varepsilon$. The triangular inequality gives that \[\partial_n(\liftphi(a(k))_n,h_{n}^k(a(k)_n))\leq 2\varepsilon+\delta,\] a contradiction to $n\in Y_k\subseteq Z_n$.
\end{proof} 
As $\varepsilon$ was arbitrary, each $\cJprod^\varepsilon$ intersects nontrivially all uncountable almost disjoint families, hence so does $\cJprod=\bigcap_{\varepsilon>0}\cJprod^{\varepsilon}$. Since every ideal containing all finite sets and intersecting all uncountable almost disjoint families is nonmeager (see \cite[Lemma 3.3.2]{Fa:AQ}), this concludes the proof.	
\end{proof}

Before moving on, we isolate a topological sufficient condition to obtain well-behaved approximate liftings.

If $\cA\subseteq\mathcal P(\bbN)$ and $n\in\bbN$, let $\cA^n=\{C\in\cA\mid n\in C\}$. We will use the following fact: if $\cA\subseteq\mathcal P(\bbN)$ is nonmeager then for all sufficiently large $n$, $\cA^n$ is nonmeager (see e.g. \cite[\S3.10]{Fa:AQ}).
\begin{lemma}
\label{L.Uniformization}
Suppose that $\cA,\cB\subseteq \cJprod$ are nonmeager in $\mathcal P(\bbN)$. Assume that for all sufficiently large $n$, all $x\in M_n$, and every two elements $y,z$ of $N_n$ with $\partial_n(y,z) > \varepsilon$, at least one of \[\{ C\in \cA^n\mid \partial_n(h_n^C(x),y) \leq \frac{\varepsilon}{3} \}\text{ and }\{ C\in \cB^n\mid \partial_n(h_n^C(x),z) \leq \frac{\varepsilon}{3} \}\]
is meager. Then $\bbN \in \cJprod^{3\varepsilon}$.
\end{lemma}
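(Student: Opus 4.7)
The plan is to define a single sequence $(h_n)$ coordinatewise, choosing $h_n(x) \in N_n$ to be a ``consensus value'' approximated (within $\varepsilon/3$) by $h_n^C(x)$ for nonmeagerly many $C \in \cA^n$, and then to use the hypothesis to force $h_n(a_n)$ to stay close to $\Phi(a)_n$ for large $n$. Concretely, fix a countable dense subset $D_n \subseteq N_n$ for each $n$. For all $n$ large enough that $\cA^n$ is nonmeager and the hypothesis applies, and for each $x \in M_n$, density of $D_n$ gives the countable decomposition $\cA^n = \bigcup_{y \in D_n} U^\cA_n(x, y)$ where $U^\cA_n(x, y) := \{C \in \cA^n : \partial_n(h_n^C(x), y) \leq \varepsilon/3\}$. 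A countable cover of a nonmeager set must contain a nonmeager term, so I can set $h_n(x) = y$ for one such $y$. For small $n$, $h_n$ is defined arbitrarily.

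Suppose for contradiction that this $(h_n)$ is not a $3\varepsilon$-lifting on $\bbN$; then there exist $a \in \prod_n M_n$ and $\delta > 0$ with $I := \{n : \partial_n(\Phi(a)_n, h_n(a_n)) > 3\varepsilon + \delta\}$ infinite. The technical heart is the symmetric claim on the $\cB$-side: the set $U^\cB_n(a_n, \Phi(a)_n) := \{C \in \cB^n : \partial_n(h_n^C(a_n), \Phi(a)_n) \leq \varepsilon/3\}$ is nonmeager for all sufficiently large $n$. Since each $C \in \cB \subseteq \cJprod$ satisfies $\limsup_{m \in C} \partial_m(\Phi(a)_m, h_m^C(a_m)) = 0$, there is a threshold $n_0(C)$ beyond which $\partial_m(h_m^C(a_m), \Phi(a)_m) \leq \varepsilon/3$ for every $m \in C$. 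Partitioning $\cB$ by $n_0(C)$ gives a countable cover of the nonmeager set $\cB$, so some piece $\cB_\ast$ is nonmeager with uniform threshold $n_\ast$. For $n \geq n_\ast$ we have $(\cB_\ast)^n \subseteq U^\cB_n(a_n, \Phi(a)_n)$, and by the standard fact recalled just before the lemma, $(\cB_\ast)^n$ is nonmeager for all sufficiently large $n$.

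Picking any $n \in I$ exceeding every threshold in the argument, both $U^\cA_n(a_n, h_n(a_n))$ (by the construction in the first paragraph) and $U^\cB_n(a_n, \Phi(a)_n)$ (by the second paragraph) are nonmeager, yet $\partial_n(h_n(a_n), \Phi(a)_n) > 3\varepsilon + \delta > \varepsilon$, contradicting the hypothesis applied to $x = a_n$, $y = h_n(a_n)$, $z = \Phi(a)_n$. The main obstacle I foresee is precisely the $\cB$-side uniformization in the second paragraph: translating the asymptotic statement ``$\Phi(a)_m$ is close to $h_m^C(a_m)$ for all large $m \in C$'' into the pointwise statement ``at each large $n$, nonmeagerly many $C \in \cB^n$ satisfy $\partial_n(h_n^C(a_n), \Phi(a)_n) \leq \varepsilon/3$''. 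Combining the countable decomposition by thresholds $n_0(C)$ with the standard ``nonmeager fibres at almost every coordinate'' fact delivers this in the expected way.
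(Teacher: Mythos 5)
Your proof is correct, and it takes a genuinely different route from the paper's. The paper constructs \emph{both} consensus sequences $\tilde h^\cA$ and $\tilde h^\cB$ (your $h_n$ is $\tilde h_n^\cA$), invokes the hypothesis already at construction time to produce a single meager exceptional set $\mathcal M\subseteq\cA$ with $\partial_n(h_n^C(x),\tilde h_n^\cA(x))\leq 3\varepsilon$ for every $C\in\cA\setminus\mathcal M$, $n\in C$, $x\in M_n$, and then derives the contradiction from a failing $a$ by picking $C\in\cA\setminus\mathcal M$ meeting the bad index set infinitely often and triangulating through $h_n^C(a_n)$. You never define $\tilde h^\cB$ or $\mathcal M$: you define only $h_n$, defer the use of the anti-correlation hypothesis to the very last step, and produce the needed nonmeager $\cB$-side set by partitioning $\cB$ according to the thresholds $n_0(C)$ along a fixed failing $a$, combined with the fact that almost every vertical fibre $\cX^n$ of a nonmeager $\cX$ is nonmeager. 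Both arguments rest on the same two ingredients---Baire-category uniformization plus the anti-correlation hypothesis---but the paper front-loads the hypothesis into a uniform statement independent of $a$ (closeness of $h^C$ to $\tilde h^\cA$ for comeagerly many $C$), whereas yours applies it pointwise at a single large coordinate of a fixed bad witness. Your route is a bit leaner in what gets constructed; the paper's produces a reusable intermediate uniformity statement, and its category bookkeeping is done once rather than per witness.
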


\begin{proof}
For every $C\in\cJprod$, fix a sequence $h^C=(h_n^C)$ where $h_n^C\colon M_n\to N_n$ and such that $\phii([a])=_C[h_n^C(a_n)]$ for all $a=(a_n)\in\prod_n M_n$.

By nonmeagerness of $\cA$ and $\cB$, both $\cA^n$ and $\cB^n$ are nonmeager for all sufficiently large~$n$. Thus, forgetting about finitely many indices, we can assume that this is true for all $n$. Let $E_n$ be a countable dense subset of $N_n$.

We will define two sequences $(\tilde h^\cA_n)$ and $(\tilde h^\cB_n)$ of maps from $M_n$ to $N_n$. Let $n\in\mathbb N$ and $x\in M_n$, we need to define $\tilde h^\cA_n(x)$ and $\tilde h^\cB_n(x)$. Since $E_n$ is dense and countable, we can find $y,z \in E_n$, such that $\{ C\in\cA^n\mid \partial_n(h_n^C(x), y) \leq \frac{\varepsilon}{3} \}$ and $\{ C\in\cB^n\mid \partial_n(h_n^C(x), z) \leq \frac{\varepsilon}{3} \}$ are nonmeager. By our hypothesis, we have that $\partial_n(y, z) \leq \varepsilon$. Define 
\[
\tilde h^\cA_n(x) = y\text{ and }\tilde h^\cB_n(x) =z.
\]
 Since both $M_n$ and $E_n$ are countable, there exists a meager $\mathcal M\subseteq \cA$ such that for every $n\in\mathbb{N}$, $x \in M_n$, and $w\in E_n$ with $\partial_n(w, \tilde h^\cB_n(x)) >\varepsilon$, we have 
	\[ \{ C\in\cA^n\mid \partial_n(h_n^C(x),w) \leq \frac{\varepsilon}{3} \} \subseteq \mathcal M. \]
 We have thus defined a sequence $(\tilde h^\cA_n)$ and a meager set $\mathcal M \subseteq \cA$ such that for every $C\in\cA \setminus \mathcal M$, $n\in C$ and $x\in M_n$ we have that
$
\partial_n(h_n^C(x),\tilde h_n^\cA(x)) \leq 3\varepsilon.
$

We claim that these functions witness that $\bbN\in\cJprod^{3\varepsilon}$. Indeed, if this is not the case, we can find $a\in\prod_nM_n$ and an infinite set $S\subseteq\bbN$ such that for all $n\in S$ one has $\partial_n(\liftphi(a)_n,\tilde h_n^\cA(a_n)) > 3\varepsilon+\delta$ for some $\delta>0$. Since $\cA\setminus \mathcal M$ is nonmeager, there would by \cite[Theorem~3.10.1(a)]{Fa:AQ} exist $C\in\cA\setminus \mathcal M$ such that $S\cap C$ is infinite. 

Since $\lim_{n\in S\cap C}\partial_n(\liftphi(a)_n, h_n^C(a_n))=0$ and $\limsup_{n\in S\cap C}\partial_n(h_n^C(x),\tilde h_n^\cA(x)) \leq 3\varepsilon$, the triangle inequality gives a contradiction.
\end{proof}
The last step of the proof of Theorem~\ref{thm:mainfinite} requires the notion of partial selectors for metric spaces, which is isolated and discussed in the appendix (see Appendix~\ref{Appendix}) to not interfere with the flow of the proof. 

\begin{proposition} \label{prop.BMliftcase}
Assume $\OCA$. If $\cJprod$ intersects every uncountable almost disjoint family, then $\mathbb N\in\cJprod$.
\end{proposition}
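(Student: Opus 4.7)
The plan is to mirror the discrete-case argument of \cite[Theorem 7]{TrivIso}: fix $\varepsilon>0$, and aim to show $\bbN\in\cJprod^{3\varepsilon}$. Combined with the arbitrariness of $\varepsilon$ and Lemma~\ref{L.intersection}, this will yield $\bbN\in\cJprod$. For each $C\in\cJprod$, fix once and for all a $0$-lifting $(h^C_n)_n$ of product form on $C$; the assignment $C\mapsto(C,(h^C_n)_n)$ embeds $\cJprod$ into the Polish space $\cP(\bbN)\times\prod_n N_n^{M_n}$.

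On the separable metric space $\cX=\prod_n M_n\times\cJprod$ (with the topology induced by the above embedding) I define an open colouring $K_0$ on $[\cX]^2$ by declaring $\{(a,C),(b,D)\}\in K_0$ when there is $n\in C\cap D$ with $a_n=b_n$ and $\partial_n(h^C_n(a_n),h^D_n(b_n))>\varepsilon/3$. The absence of an uncountable $K_0$-homogeneous subset of $\cX$ is shown just as in the Claim of the previous proposition: given a putative uncountable $K_0$-homogeneous $H=\{(a_\alpha,C_\alpha):\alpha<\omega_1\}$, an MA-style shrinking (as in \cite[Proposition 5.2]{TrivIso}) yields $s^0,s^1\in\prod_n M_n$ with $a_\alpha(n)\in\{s^0_n,s^1_n\}$ for every $\alpha$ and every $n\in C_\alpha$; setting $I^i_\alpha=\{n\in C_\alpha:a_\alpha(n)=s^i_n\}$, the $0$-lifting property supplies $\lim_{n\in I^i_\alpha}\partial_n(h^{C_\alpha}_n(s^i_n),\liftphi(s^i)_n)=0$, and a last separability-based shrinking on a finite prefix produces the desired contradiction to $K_0$-homogeneity via the triangle inequality. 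Applying $\OCA$, one obtains a cover $\cX=\bigcup_k\cX_k$ with each $\cX_k$ being $K_1$-homogeneous.

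Let $\cJ_k=\pi_2(\cX_k)\subseteq\cJprod$. Since $\cJprod$ is nonmeager in $\cP(\bbN)$ by the hypothesis together with \cite[Lemma 3.3.2]{Fa:AQ}, some $\cJ^*=\cJ_k$ is nonmeager. I would then like to invoke Lemma~\ref{L.Uniformization} with $\cA=\cB=\cJ^*$. However, $K_1$-homogeneity of the associated $\cX_k$ directly provides compatibility $\partial_n(h^C_n(x),h^D_n(x))\le\varepsilon/3$ only at the values $x=(a_C)_n=(a_D)_n$ realised by pairs $(a_C,C),(a_D,D)\in\cX_k$, whereas the hypothesis of Lemma~\ref{L.Uniformization} demands this for every $x\in M_n$. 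Bridging this gap is precisely where the notion of partial selectors for metric spaces developed in Appendix~\ref{Appendix} intervenes: it provides the mechanism to extend the sampled compatibility to all of $M_n$ for all sufficiently large $n$. With this extension in hand, Lemma~\ref{L.Uniformization} gives $\bbN\in\cJprod^{3\varepsilon}$, completing the argument.

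The main obstacle is this final extension step. The preceding shrinking and $\OCA$ argument is a close adaptation of the discrete case, while the genuinely metric phenomenon that $M_n$ may contain many elements not attained by any specific witness in $\cX_k$ is exactly what the partial selectors machinery is tailored to overcome.
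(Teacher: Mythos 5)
There is a genuine gap, and the proposed route diverges substantially from what actually works. Two issues are decisive. First, the proposition is stated under $\OCA$ alone, yet your argument for ruling out an uncountable $K_0$-homogeneous set invokes an MA-style shrinking \`a la \cite[Proposition 5.2]{TrivIso}; the paper's proof of this proposition deliberately avoids $\MAsl$. Moreover, that MA argument is tailored to the second coordinate ranging over an almost disjoint family (this is what makes the relevant forcing $\sigma$-linked); replacing the a.d.\ family $\cA$ by the ideal $\cJprod$, whose members overlap heavily, breaks the $\sigma$-linkedness and you cannot simply assert that the shrinking ``translates readily.'' Second, and more importantly, the step you flag as the ``main obstacle'' --- upgrading the sampled compatibility $\partial_n(h^C_n(a_n),h^D_n(a_n))\leq \varepsilon/3$ at the coordinates $a_n$ actually realised inside a $K_1$-homogeneous piece to the pointwise bound required by Lemma~\ref{L.Uniformization} for \emph{every} $x\in M_n$ --- is left entirely unresolved, and the partial selector machinery does not perform that kind of extension. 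It is not an interpolation device that densifies a coloured cover; it is a structural dichotomy about families of partial selectors.

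What the paper does instead is to skip the $\prod_n M_n$ coordinate altogether. Because each $M_n$ is finite, $h^A_n$ is a point of the separable metric space $Y_n=(N_n)^{M_n}$ with the sup (here max) metric, so the family $\{h^A: A\in\cJprod\}$ is directly a family of partial selectors with $\dom(h^A)=A$, and $\Diff_\varepsilon(h^A,h^B)$ is exactly the set of $n$ where $\sup_{x\in M_n}\partial_n(h^A_n(x),h^B_n(x))>\varepsilon$. After a short coherence claim showing each $\Diff_\varepsilon(h^A,h^B)$ is finite, Lemma~\ref{lem:partialselmetric} (the metric partial selector dichotomy, proved from $\OCA$) combined with the hypothesis that $\cJprod$ meets every uncountable a.d.\ family yields a decomposition $\cJprod=\bigcup_n\cG_n$ with a \emph{uniform numerical bound} $|\Diff_\varepsilon(h^A,h^B)|\leq n$ for $A,B\in\cG_n$ --- this already gives, for free, the full pointwise compatibility you were hoping to extract from sampling. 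One then picks a nonmeager $\cG_{\bar n}$ and applies Lemma~\ref{L.Uniformization} at most $\bar n+1$ times: either $\bbN\in\cJprod^{9\varepsilon}$ at some stage, or one accumulates $\bar n+1$ indices where some $A\in\cG_{\bar n}$ and $B\in\cG_{\bar n}$ would have to disagree by more than $\varepsilon$, contradicting the bound. Your outline omits both the reduction to partial selectors of $(N_n)^{M_n}$ (where the finiteness of $M_n$ is exploited) and the iterated use of the uniformization lemma that terminates via the cardinality bound on $\Diff_\varepsilon$; these two ingredients are precisely what replace your missing ``extension'' step.
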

\begin{proof}
We prove that $\mathbb N\in \cJprod^\varepsilon$ for any $\varepsilon>0$. Again, for every $A\in \cJprod$ fix functions $h_n^A\colon M_n\to N_n$
such that $\phii([a])=_A[h_n^A(a_n)]$ for all $a=(a_n)\in\prod_n M_n$. These cohere:

\begin{claim}\label{C.Coherence} 
For all $A$ and $B$ in $\cJprod $ and $\varepsilon>0$ the set
\[ 
\Diff_\varepsilon(h^A,h^B)= \{n \in A \cap B : \exists a\in M_n\, (\partial_n(h^A_n(a),h_n^B(a))>\varepsilon)\}
\]
is finite. 
\end{claim}
\begin{proof} 
If not, we can find $A$ and $B$ in $\cJprod$, an infinite $C\subseteq A\cap B$, $\varepsilon>0$ and $a_n\in M_n$, for $n\in C$, such that $\partial^C([(h_n^A(a_n))],[(h_n^B(a_n))])\geq\varepsilon>0$. For $n\notin C$, let $a_n$ be any element of $M_n$ and set $a=(a_n)$. Since $h^A$ witnesses that $A\in\cJprod$ and $C\subseteq A$, then $\partial^C(\phii([a]),[h^A(a)])\leq\partial^A(\phii([a]),[h^A(a)])=0$. Similarly, $\partial^C(\phii([a]),[h^B(a)])=0$. This is a contradiction.
\end{proof}
Consider the separable metric space $(Y_n, d_n)$ with $Y_n= (N_n)^{M_n}$ and
$d_n(h_n,h'_n) = \max_{x \in M_n}\partial_n(h_n(x),h'_n(x))$
, so that $\{h^A\mid A\in \cJprod\}$ is a family of partial selectors (Definition~\ref{Def.PartialSelector}). Fix $\varepsilon>0$.

\begin{claim}\label{C.Uniformization.coherent}
There are sets $\cG_n$, for $n\in\bbN$, such that $\cJprod=\bigcup_n \cG_n$, and for all $n$ and all $A,B$ in $\cG_n$ we have $|\Diff_\varepsilon(h^A,h^B)|\leq n$.
\end{claim}
\begin{proof}
Assume otherwise. By $\OCA$ and Lemma~\ref{lem:partialselmetric} then there are a perfect tree-like almost disjoint family $\cA$, an uncountable $Z\subseteq \twoN$, and $f\colon Z\to \cJprod$ such that for every $A\in \cA$ and all distinct $x,y$ in $Z$ we have 
\[
(\Diff_\varepsilon(h^{f(x)},h^{f(y)})\cap A)\setminus \Delta(x,y)\neq \emptyset. 
\]
Since $\cJprod$ intersects every uncountable almost disjoint family, there is $B\in \cA\cap \cJprod$. For $x\in Z$ let $n(x)=\max (\Diff_{\varepsilon/2}(h^{f(x)},h^B))$ (with $\max\emptyset=0$). By refining $Z$ to an uncountable subset, we can assume $n(x)$ is constant for all $x\in Z$. Choose distinct $x,y$ in $Z$ satisfying $\Delta(x,y)\geq n$ and fix $j$ in $(\Diff_\varepsilon(h^{f(x)},h^{f(y)})\cap B)\setminus n$. Then for every $w\in M_j$ we have that 
\[
\partial_n(h^{f(x)}_j(w),h^{f(y)}_j(w))\leq \partial_n(h^{f(x)}_j(w),h^{B}_j(w))+\partial_n(h^{B}_j(w),h^{f(y)}_j(w))\leq\varepsilon.
\]
 This is a contradiction. 
\end{proof}

Since $\cJprod$ is nonmeager, there is $\bar n$ such that $\cG_{\bar n}$ as provided by Claim~\ref{C.Uniformization.coherent} is nonmeager. We want to show that $\bbN\in\cJprod^{9\varepsilon}$.

Let $\cA_0=\cB_0=\cG_{\bar{n}}$. By Lemma~\ref{L.Uniformization}, we have two options: either $\bbN\in\cJprod^{9\varepsilon}$, or there are arbitrarily large $k_1\in \bbN$ and $x_1\in M_{k_1}$, $y_1, z_1\in N_{k_1}$ with $\partial_{k_1}(y_1,z_1) > 3\varepsilon$ such that
\[
\cA_1:= \{ C\in \cA_0^{k_1}\mid \partial_{k_1}(h_{k_1}^C(x_1),y_1) \leq {\varepsilon} \} \text{ and }
\cB_1:= \{ C\in \cB_0^{k_1}\mid \partial_{k_1}( h_{k_1}^C(x_1) , z_1) \leq {\varepsilon} \}
\]
are nonmeager.

Suppose thus this second condition holds. By again applying Lemma~{\ref{L.Uniformization}}, but now to $\cA_1,\cB_1$, we can have that either $\bbN\in\cJprod^{9\varepsilon}$ or we can find $k_2>k_1$, $x_2\in M_{k_2}$ and $y_2,z_2\in N_{k_2}$ with $\partial_{k_2}(y_2,z_2) > 3\varepsilon$ such that
\[
\cA_2:= \{ C\in \cA_1^{k_2}\mid \partial_{k_2}(h_{k_2}^C(x_2),y_2) \leq {\varepsilon} \} \text{ and }
\cB_2:= \{ C\in \cB_1^{k_2}\mid \partial_{k_2}( h_{k_2}^C(x_2) , z_2) \leq {\varepsilon} \}
\]
are nonmeager.

We claim that this construction must stop. Suppose that $\mathcal A_{\bar n+1}$ and $\mathcal B_{\bar n+1}$ can be constructed. We thus have natural numbers $k_1<\ldots<k_{\bar{n}+1}$ and $\bar n +1$-tuples $(x_1,\ldots,x_{\bar{n}+1})$, $(y_1,\ldots,y_{\bar{n}+1})$ and $(z_1,\ldots,z_{\bar{n}+1})$, such that
\[x_i\in M_{k_i}, \text{ and } y_i, z_i\in N_{k_i} \text{ with } \partial_{k_i}(y_i,z_i) > 3\varepsilon \quad \forall i \in \{1 \ldots \bar n+1 \}\] and all the sets
\[ \cA_{k+1}=\{ C\in\cG_{\bar{n}}\mid \{k_1,\ldots,k_{\bar{n}+1} \}\subseteq C\text{ and }(\forall i\in \{1,\ldots,k+1 \}) \, \partial_{k_i}( h_{k_i}^C(x_i), y_i) \leq {\varepsilon} \}, \]
\[ \cB_{k+1}=\{ C\in\cG_{\bar{n}}\mid \{k_1,\ldots,k_{\bar{n}+1} \}\subseteq C\text{ and } (\forall i\in \{1,\ldots,k+1 \}) \,\partial_{k_i}(h_{k_i}^C(x_i),z_i) \leq {\varepsilon} \} \]
are nonmeager.

The latter is however impossible because any $A\in \cA_{\bar n+1}$, $B\in \cB_{\bar n+1}$ would satisfy $\{ k_1,\ldots,k_{\bar n+1}\}\subseteq \Diff_{\varepsilon}(A,B)$, in contradiction with $A, B \in \cG_{\bar n}$ and therefore $|\Diff_\varepsilon(A,B)|=\bar n$. This shows that $\bbN\in\cJprod^{9\varepsilon}$. As $\varepsilon$ is arbitrary and $\cJprod=\bigcap_{\varepsilon>0}\cJprod^{\varepsilon}$, we have the thesis.
\end{proof}

\subsection{The separable case}
We use the finitary version of our main result (Theorem~\ref{thm:mainfinite}) to prove Theorem~\ref{th.main}.%\ref{th.mainIntro}.

\begin{proof}[Proof of Theorem~\ref{th.main}]
We keep the notation as fixed right after the statement of Theorem~\ref{thm:mainfinite}. As before, we can assume that $\alpha(\varphi)$, the automorphism of $\mathcal P(\bbN)/\Fin$ given by the definition of coordinate respecting, is the identity. 

Fix a countable dense set $D_n\subseteq M_n$. Notice that every element of $\prod_nM_n/\Fin$ can be seen as an element of $\prod_n D_n/\Fin$: for $a=(a_n)\in\prod_nM_n$, just choose $b_n\in D_n$ with $d_n(a_n,b_n)<\frac{1}{n}$, so that $[a]=[b]$. Furthermore, if $\phii$ has a lifting of product form on $\bbN$ given by functions $h_n\colon D_n\to N_n$, one can extend these to functions from $M_n$ by setting, for $x\in M_n\setminus D_n$, $\tilde h_n(x)=h_n(y)$ where $y$ is any element of $M_n$ which has distance $\leq \frac{1}{n}$ from~$x$. These maps still provide a lifting for $\phii$, hence we can assume each $M_n$ is countable.

Enumerate each $M_n=\{x_{n,k}\}_{k\in \bbN}$ (allowing repetitions). 
For $n,m\in\bbN$, let $M_{n,m}=\{x_{n,i}\}_{i\leq m}$; each $M_{n,m}$ is a finite metric space. For $f\in\bbN^\bbN$, let 
\[
M_f=\prod_{n}M_{n,f(n)}/\Fin.
\]
For each $f\in \bbN^\bbN$, we apply Theorem~\ref{thm:mainfinite} to the functions $\phii_f=\phii\restriction M_f$, and get a sequence $(h_{n,f})_n$ such that $h_{n,f}\colon M_{n,f(n)}\to N_n$ and $h_f=(h_{n,f})_{n\in\bbN}$ witnesses that $\bbN\in\cJprod(\varphi_f)$, i.e., $h_f$ lifts $\phii$ on $M_f$. 
The following is the appropriate notion of coherence here.
\begin{claim}
Let $f,g\in\bbN^\bbN$ and $\varepsilon>0$. Then there is $n(f,g,\varepsilon)$ such that for all $n\geq n(f,g,\varepsilon)$ and $x\in M_{n,f(n)}\cap M_{n,g(n)}$ we have that
\[
\partial_n(h_{n,f}(x),h_{n,g}(x))<\varepsilon.
\]
\end{claim}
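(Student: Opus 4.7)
The plan is to mimic the earlier coherence statement (Claim~\ref{C.Coherence}) almost verbatim, using the fact that $\phii$ is a single function whose restriction to any $M_f$ is lifted on all of $\bbN$ by the sequence $h_f = (h_{n,f})_n$. The key observation is that $M_{n,f(n)} \cap M_{n,g(n)} = M_{n,\min(f(n),g(n))}$, so if we let $h \in \bbN^\bbN$ be defined by $h(n) = \min(f(n),g(n))$, then $M_h$ embeds canonically into both $M_f$ and $M_g$, and $\phii_h = \phii\rs M_h$ agrees with the restriction of both $\phii_f$ and $\phii_g$.

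I would argue by contradiction. Assume the conclusion fails for some $f, g, \varepsilon$. Then there is an infinite $C \subseteq \bbN$ and, for each $n \in C$, an element $a_n \in M_{n,f(n)} \cap M_{n,g(n)} = M_{n,h(n)}$ with $\partial_n(h_{n,f}(a_n), h_{n,g}(a_n)) \geq \varepsilon$. For $n \notin C$ pick any $a_n \in M_{n,h(n)}$ (this is nonempty as it contains $x_{n,0}$), and set $a = (a_n)$. Then $a \in \prod_n M_{n,h(n)} \subseteq \prod_n M_{n,f(n)}$ and $a \in \prod_n M_{n,g(n)}$, so $[a]$ is a well-defined element of both $M_f$ and $M_g$.

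Since $h_f$ is a lifting of product form on $\bbN$ for $\phii_f = \phii\rs M_f$, we have $\phii([a]) = [(h_{n,f}(a_n))_n]$ in $N$; similarly $\phii([a]) = [(h_{n,g}(a_n))_n]$ because $h_g$ lifts $\phii_g = \phii\rs M_g$ on $\bbN$. Comparing these two representatives of $\phii([a])$ gives
\[
\lim_n \partial_n\bigl(h_{n,f}(a_n),\, h_{n,g}(a_n)\bigr) = 0,
\]
which contradicts $\partial_n(h_{n,f}(a_n), h_{n,g}(a_n)) \geq \varepsilon$ for every $n \in C$. Hence an index $n(f,g,\varepsilon)$ exists.

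There is no real obstacle here: the only thing one has to notice is that intersections of finite initial segments along the enumeration are again initial segments, so that $M_h$ sits diagonally inside $M_f$ and $M_g$ and both product liftings yield representatives of the same class $\phii([a])$; the rest is the same pigeonhole style argument as in Claim~\ref{C.Coherence}.
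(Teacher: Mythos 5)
Your proposal is correct and takes essentially the same approach as the paper: both rely on the fact that for any $a\in\prod_n M_{n,\min(f(n),g(n))}$ the two lifts $(h_{n,f}(a_n))_n$ and $(h_{n,g}(a_n))_n$ represent the same class $\phii([a])$, hence agree asymptotically. The paper states only this one-line coherence observation and leaves the routine contradiction argument (diagonalising over the offending indices) implicit, whereas you spell it out explicitly; otherwise the arguments are identical.
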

\begin{proof}
If $a=(a_n)\in M_f\cap M_g$ both $(h_f(a_n))$ and $(h_g(a_n))$ are liftings for $\phii([a])$. Hence for every such $a$ we have that $\limsup_n \partial_n(h_{n,f}(a_n),h_{n,g}(a_n))=0$.
\end{proof}
Fix $\varepsilon>0$ and colour unordered pairs in $[\bbN^\bbN]^2=K_0\sqcup K_1$ by $\{f,g\}\in K_0$ if and only if
\[
\exists n \exists x\in M_{n,f(n)}\cap M_{n,g(n)}\, (\partial_n(h_{n,f}(x),h_{n,g}(x))>\varepsilon).
\]
\begin{claim}
There is no uncountable $K_0$-homogeneous set.
\end{claim}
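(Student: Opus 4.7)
The plan is to imitate the no-uncountable-$K_0$-homogeneous-subset claim that appeared earlier in the proof that $\cJprod$ is nonmeager, now for the index set $\bbN^\bbN$ parameterising the lifts $h_f$ rather than for pairs $(a,A)$. Suppose, for contradiction, that $H\subseteq\bbN^\bbN$ is uncountable and $K_0$-homogeneous; I will successively shrink $H$ to uncountable subsets via $\MAsl$, pigeonhole, and separability, until two distinct $f,g\in H$ fail to have any $K_0$-witness, contradicting the hypothesis.

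The first step, following the blueprint of the $\MAsl$ argument at the beginning of \cite[Proposition 5.2]{TrivIso}, is to fix a pair of skeleton sequences $\sigma^0,\sigma^1\in\bbN^\bbN$ such that, after shrinking $H$ to a still-uncountable subset, every $f\in H$ satisfies $f(n)\in\{\sigma^0(n),\sigma^1(n)\}$ for all $n\in\bbN$. Setting $I_f^i=\{n:f(n)=\sigma^i(n)\}$, one then has $M_{n,f(n)}=M_{n,\sigma^i(n)}$ on $I_f^i$. Applying the preceding claim (with the threshold $\varepsilon/4$ in place of $\varepsilon$) to the pairs $\{f,\sigma^0\}$, $\{f,\sigma^1\}$ and $\{\sigma^0,\sigma^1\}$ yields an integer $n_f$ depending on $f$ and a constant $N_{01}$ beyond which the relevant lifts agree to within $\varepsilon/4$ on their common domains. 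A pigeonhole then allows us to assume $n_f=n^*$ is constant on an uncountable subset; a second pigeonhole over the finitely many patterns $(i_n)_{n<N}\in\{0,1\}^N$, for $N=\max(n^*,N_{01})$, lets us assume $M_{n,f(n)}$ is a fixed finite set $F_n$ for $n<N$ and all $f\in H$; and a final use of separability of the Polish space $\prod_{n<N}N_n^{F_n}$ produces an uncountable further refinement on which $(h_{n,f})_{n<N}$ lies in an $\varepsilon/8$-ball around a single point, so that $\partial_n(h_{n,f}(x),h_{n,g}(x))<\varepsilon/4$ on $F_n$ for $n<N$ and for any $f,g$ in the final $H$.

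Given any two distinct $f,g$ in this final $H$, the supposed $K_0$-witness $(n,x)$ is then ruled out case by case: for $n<N$ the separability refinement already yields $\partial_n(h_{n,f}(x),h_{n,g}(x))<\varepsilon/4$; for $n\geq N$ with $f(n)=g(n)=\sigma^i(n)$, the triangle inequality through $h_{n,\sigma^i}(x)$ gives a bound of $\varepsilon/2$; and for $n\geq N$ with $\{f(n),g(n)\}=\{\sigma^0(n),\sigma^1(n)\}$, the triangle inequality through $h_{n,\sigma^0}(x)$ and $h_{n,\sigma^1}(x)$ gives a bound of $3\varepsilon/4$. In every case the supposed witness fails, a contradiction.

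The genuinely delicate step, and the one I expect to be the main obstacle, is producing the binary skeleton $\{\sigma^0,\sigma^1\}$: the analogous discrete argument made essential use of each coordinate living in a finite alphabet $M_n$, whereas here $f(n)$ ranges over the infinite set $\bbN$, so some care is needed to check that the natural $\sigma$-linked poset of finite approximations still admits enough dense sets for $\MAsl$ to deliver an uncountable refinement. Once this is done, the rest is routine triangle-inequality bookkeeping backed by two pigeonholes and one separability argument.
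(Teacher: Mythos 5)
Your concern at the end is exactly right, and it is fatal to the proof as written. The binary-skeleton step via $\MAsl$ requires, for the relevant dense sets to actually be dense, that for every finite partial pair $(p^0,p^1)$ arising during the forcing there remain uncountably many $f\in H$ agreeing with $p^0$ or $p^1$ on the common domain; when each coordinate ranges over a finite set $M_n$ this follows by pigeonhole, but over the infinite alphabet $\bbN$ there is no such guarantee, and the poset can have conditions with no proper extension meeting the dense sets. So the first and most substantial step of your argument is not justified, and you have correctly flagged this yourself.

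In fact the skeleton is not what is needed here. The paper instead exploits that $\OCA$ implies $\mathfrak b>\aleph_1$, so an uncountable $\mathcal H$ is dominated by a single $g\in\bbN^\bbN$. The crucial point is the monotonicity $M_{n,\ell}\subseteq M_{n,\ell'}$ for $\ell\leq\ell'$: once $f(n)\leq g(n)$, every $x\in M_{n,f(n)}$ lies in $M_{n,g(n)}$, so the coherence claim compares $h_{n,f}$ to $h_{n,g}$ directly on the entire domain of $h_{n,f}$. After pigeonholing over the countably many patterns on a finite initial segment, plus one separability refinement, two triangles through $h_{n,g}$ yield $\{f,f'\}\notin K_0$, the desired contradiction. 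This also explains why your route is circular: to make the $\MAsl$ poset ccc and the dense sets dense you would first have to dominate $\mathcal H$ so each coordinate lives in a finite alphabet, but once you have the dominating $g$, the skeleton adds nothing. Your case analysis in the final paragraph is sound in itself, but it is applied to a refinement you have not been able to produce.
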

\begin{proof}
Suppose we have an uncountable $K_0$-homogenous $\mathcal H\subseteq\bbN^\bbN$. We may assume $\mathcal H$ is of size $\aleph_1$. Since we have $\OCA$, the bounding number\footnote{The bounding number $\mathfrak b$ is the size of the least $\leq_*$-unbounded set of functions in $\bbN^\bbN$.} $\mathfrak b$ is greater than $\aleph_1$, hence there is $g\in\bbN^\bbN$ such that $f\leq_* g$ for all $f\in \mathcal H$, where $\leq_*$ is the order of eventual domination on $\bbN^\bbN$. By shrinking $\mathcal H$ to an uncountable subset of it, we can assume that there is $\bar n$ such that for all $f\in \mathcal H$ we have $n(f,g,\varepsilon/2) \leq \bar n$ and, in addition, that for all $f,f'\in\mathcal H$ and $n\in \bbN$ we have that $f(n)= f'(n)$ when $n < \bar n$ and $f(n), f'(n) \leq g(n)$ when $n\geq\bar n$. Since all the spaces 
$N_n$ are separable we can also assume that for all $n\leq \bar n$, $x\in M_{n,f(n)}$ and $f,f'\in\mathcal H$ we have that $\partial_n(h_{n,f}(x),h_{n,f'}(x))<\varepsilon$. For any $f$ and $f'$ in $\mathcal H$, the triangle inequality now gives that $\{f,f'\}\notin K_0$. This is a contradiction.
\end{proof}
$\bbN^\bbN$ can be endowed with a separable metrizable topology that makes $K_0$ an open subset of the set $[\bbN^\bbN]^2$ of its unordered pairs. For this, declare a set $U \subseteq \bbN^\bbN$ to be a basic neighborhood of $f \in \bbN^\bbN$ if there exists $m \in \bbN$ and $\delta >0$ such that all $g \in \bbN^\bbN$ belong to $U$ which satisfy:
\[ \forall n \leq m \quad f(n) = g(n) \text{ and } \forall x \in M_{n,f(n)}\  \partial_n(h_{n,f}(x), h_{n,g}(x)) \leq \delta.\]
By $\OCA$, we can write $\bbN^\bbN=\bigcup\mathcal G_n$ where each $\mathcal G_{n}$ is $K_1$-homogeneous. As ${(\bbN^\bbN,\leq_*)}$ is $\sigma$-directed, there is $\bar n$ such that $\mathcal G_{\bar n}$ is $\leq_*$-cofinal. Since $\leq_*=\bigcup\leq_k$, where $\leq_k$ is the order given by $f\leq_k g$ if and only if for all $n\geq k$ one has that $f(n)\leq g(n)$, we can find $k$ such that $\mathcal G_{\bar n}$ is $\leq_k$-cofinal. 

We are ready to construct our functions: for $n\geq k$ and $x\in M_n$, we pick $g\in\mathcal G_{\bar n}$ such that $x\in M_{n,g(n)}$, and define $h_n(x)=h_{n,g}(x)$. For $n<k$ we let $h_n$ be any function. The functions $(h_n)_{n\in\mathbb N}$ witness that $\mathbb N\in\cJprod^{\varepsilon}$. This follows because for every $a \in \prod_n M_n,$ there exists $g \in \cG_{\bar n}$ such that $a_n \in M_{n,g(n)}$ for all $n\geq k$.  We then have that $[h_{n,g}(a_n)] = \varphi([a])$ and since, by $K_1$-homogeneity of $\cG_{\bar n}$,  $\partial_n(h_n(a_n), h_{n,g}(a_n)) \leq \varepsilon,$ for all $n\geq k,$ it follows that the functions $(h_n)_{n}$ provide an $\varepsilon$-lift of $\phii$. 

Since $\varepsilon$ is arbitrary and $\cJprod=\bigcap_{\varepsilon>0}\cJprod^{\varepsilon}$, we have the thesis.
\end{proof}

\appendix
\section{Metric partial selectors}\label{Appendix}
The following is generalising the content of \cite[\S3.1]{TrivIso}. 
\begin{definition}[{\cite[Definition 3.4]{TrivIso}}]\label{Def.PartialSelector}
A \emph{partial selector} for a sequence of sets $(Y_n)_{n\in \bbN}$, is simply a function $g$ with $\dom(g) \subseteq \bbN$ and $g\in \prod_{n\in \dom(g)} Y_n$. 
If $Y_n$ are topological spaces, and $\cF$ is a set of partial selectors for $(Y_n)_{n\in \bbN}$,  we topologize $\cF$ as follows. For every $n\in \bbN,$  fix a point $0_n\in Y_n$ and embed $\cF$ in $\prod_n Y_n \times \mathcal P(\bbN)$ via the map $g \mapsto (\hat g, \dom(g))$ where $\hat g\in \prod_n Y_n$ agrees with $g$ on $\dom(g)$ and satisfies $\hat g(n)=0_n$ for $n\in \bbN\setminus \dom(g)$. Now consider the subspace topology on $\cF$ induced by this embedding. 
\end{definition}

If the sets $Y_n$ are separable and metrizable spaces, then the induced topology on any family of partial selectors is again separable and metrizable.
If $g$ and $h$ are partial selectors for a sequence of metric spaces $(Y_n,d_n)$, and $\varepsilon>0$, we let 
\[
\Diff_\varepsilon(g,h)=\{n\in \dom(g) \cap \dom(h)\mid d_n(g(n),h(n))> \varepsilon\}.
\]
Below, we adapt the `partial selector dichotomy' (\cite[Proposition 3.7]{TrivIso}) to the metric setting. For distinct $x,y\in \{ 0,1\}^\mathbb{N}$, we denote by $\Delta(x,y)$ the minimal $n$ such that $x_n\neq y_n$.
\begin{lemma}\label{lem:partialselmetric}
Assume $\OCA$. Let $\cF$ be a family of partial selectors for a sequence of separable metric spaces $((Y_n,d_n))_{n\in\bbN}$, and let $\varepsilon > 0$. Then one of the following possibilities holds. 
\begin{enumerate}[label=$(\arabic*)$]
	\item \label{1.Unifm} There are $\cF_n$, for $n\in \bbN$, such that $\cF=\bigcup_n \cF_n$, and for all $n$ and all $g,h$ in $\cF_n$ we have $|\Diff_\varepsilon(g,h)|\leq n$. 
	\item \label{3.Unifm} There are a perfect tree-like almost disjoint family $\cA$, an uncountable $Z\subseteq \twoN$, and an injective function $f\colon Z\to \cF$ such that for every $A\in \cA$ and all distinct $x,y$ in $Z$ we have 
	\[
	(\Diff_\varepsilon(f(x),f(y))\cap A)\setminus \Delta(x,y)\neq \emptyset.
	\] 
\end{enumerate} 
\end{lemma}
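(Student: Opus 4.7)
The plan is to adapt the argument of \cite[Proposition 3.7]{TrivIso}, the analogous dichotomy in the discrete setting. The crucial point is that, thanks to the strict inequality in the definition of $\Diff_\varepsilon$, the relevant subsets of $[\cF]^2$ remain open in the induced topology on $\cF$, so $\OCA$ applies directly. For each $n \in \bbN$, I would consider the colouring
\[
K_0^n = \bigl\{\{g, h\} \in [\cF]^2 : \exists\, i_0 < \cdots < i_n \text{ with } i_j \in \Diff_\varepsilon(g, h) \text{ for all } j \leq n\bigr\}.
\]
This is open in $[\cF]^2$ (where $\cF$ is viewed as a subspace of $\prod_n Y_n \times \cP(\bbN)$): a witnessing tuple $i_0 < \cdots < i_n$ involves only clopen conditions in the $\cP(\bbN)$-coordinate (namely that $i_j \in \dom(g)\cap\dom(h)$) and open conditions in finitely many $Y_n$-coordinates (namely that $d_{i_j}(g(i_j),h(i_j)) > \varepsilon$).

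Next, I would apply $\OCA$ to each $K_0^n$. This yields the dichotomy: either for some $n$, $\cF$ admits a countable cover by $K_1^n$-homogeneous pieces (pairs with $|\Diff_\varepsilon| \leq n$), or for every $n$ there is an uncountable $K_0^n$-homogeneous subset of $\cF$. In the first case, reindexing the cover---using that $|\Diff_\varepsilon(g,h)| \leq n$ implies $|\Diff_\varepsilon(g,h)| \leq m$ for all $m \geq n$---directly gives alternative \ref{1.Unifm}.

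In the second case I would inductively construct alternative \ref{3.Unifm}: build a perfect binary tree $T \subseteq 2^{<\bbN}$ with a map $s \mapsto g_s \in \cF$ together with indices $n_s \in \bbN$, arranged so that for each $z \in 2^{\bbN}$ the sequence $(g_{z \upharpoonright k})_k$ converges in $\cF$ to an element $f(z)$, and the sets $A_z = \{n_{z \upharpoonright k}: k \in \bbN\}$ assemble into the perfect tree-like almost disjoint family $\cA$. At each splitting node $s$, the uncountable $K_0^{|s|}$-homogeneity supplies pairs of partial selectors $\varepsilon$-differing at arbitrarily late indices; one such index is chosen as $n_s$ and placed into every branch of $\cA$ passing through $s$. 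The main obstacle will be the simultaneous bookkeeping: ensuring that $T$ is perfect, that $\cA$ is almost disjoint with the required tree-like shape, and that the property $(\Diff_\varepsilon(f(x), f(y)) \cap A) \setminus \Delta(x, y) \neq \emptyset$ is forced for every $A \in \cA$ and every pair of distinct branches $x, y \in Z$. This bookkeeping is essentially the content of the discrete argument in \cite[Proposition 3.7]{TrivIso}; thanks to openness of the $K_0^n$ in our metric topology, it transfers to the present setting without substantial change.
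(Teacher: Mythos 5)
Your reduction of the first alternative is fine, and you correctly identify the one genuinely new point in the metric setting: the strict inequality $d_{i_j}(g(i_j),h(i_j))>\varepsilon$ makes the relevant conditions open in the partial-selector topology, so the colourings stay open. That is indeed what the paper needs to check.

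The gap is in the second alternative, and it is substantial. You apply plain $\OCA$ separately to each colouring $K_0^n$, obtaining for each $n$ an uncountable $K_0^n$-homogeneous set, and then propose to build the perfect tree $T$, the injection $f\colon Z\to\cF$, and the tree-like almost disjoint family $\cA$ by ``bookkeeping.'' This does not work as stated: the uncountable homogeneous sets you get from $\OCA$ are one per $n$, with no relation among them, and there is no reason they can be interleaved into a single perfect tree whose branches simultaneously witness arbitrarily large $\Diff_\varepsilon$. More importantly, this is not what the discrete argument in \cite[Proposition 3.7]{TrivIso} does, so you cannot defer the bookkeeping to it. That argument (and the present paper) invokes $\OCAsharp$, a formal strengthening of $\OCA$ whose second alternative \emph{directly} outputs the perfect tree-like almost disjoint family $\cA$, the uncountable $Z\subseteq\twoN$, and the injection $f$. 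The passage from ``many uncountable homogeneous sets'' to a tree-like structure is exactly the content of $\OCAsharp$, not a routine refinement of $\OCA$.

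Relatedly, the colourings must be chosen to fit the $\OCAsharp$ machinery. The paper does not use your $K_0^n$ (defined by $|\Diff_\varepsilon|\geq n+1$); it builds a countable family $\cW^n=\{W^n_m\}_m$ of symmetric open rectangles $U^n_{m,0}\times U^n_{m,1}\cup U^n_{m,1}\times U^n_{m,0}$, calibrated so that $\bigcup_m U^n_{m,0}\times U^n_{m,1}$ captures the pairs with $|\Diff_\varepsilon(g_0,g_1)|\geq n+(4^{n+1}-1)/3$. The specific bound $(4^{n+1}-1)/3$ (the size of a complete $4$-ary tree of depth $n$) and the decomposition into rectangles indexed by finite data $\langle S(n,m),u^n_m\rangle$ are what allow the second alternative of $\OCAsharp$ to produce the tree-like $\cA$ with the required intersection property $(\Diff_\varepsilon(f(x),f(y))\cap A)\setminus\Delta(x,y)\neq\emptyset$. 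These features are absent from your sketch, and they are not cosmetic; without them the claimed construction of $\cA$, $Z$, $f$ in case (2) has no foundation.
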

In the following we use $\OCAsharp$, a formal strenghtening of $\OCA$ (\cite[Definition 3.2]{TrivIso}). 
\begin{proof}
	As the proof of \cite[Proposition 3.7]{TrivIso} goes through, mutatis mutandis, equally well for proving this metric generalization, we only sketch the required modifications.
One first defines colourings to which $\OCAsharp$ will be applied, and then shows that the first alternative of $\OCAsharp$ implies \ref{1.Unifm} while the second one implies \ref{3.Unifm}. Deriving \ref{1.Unifm} (resp.\ \ref{3.Unifm}) from the respective alternatives of $\OCAsharp$ proceeds exactly as in the discrete case (but considering the sets $\Diff_{\varepsilon}(g_0,g_1)$ instead of $\Diff(g_0,g_1)$). We therefore restrict to defining the sets of colourings $\cW^n=\{W^n_m\mid m\in \bbN\}$ to which to apply $\OCAsharp$.  

For any $l$, since $Y_l$ is a separable metric space, we can fix  open balls  $V(l,m,i)$, for $m\in \bbN$ and $i=0,1$, such that $d_l(y_0,y_1) > \varepsilon$ for all $y_0 \in V(l,m,0)$ and $y_1 \in V(l,m,1)$ and such that in addition for every pair $(y_0,y_1) \in Y_l^2$ with $d_l(y_0,y_1) > \varepsilon$ there is $m$ such that $y_i\in V(l,m,i)$ for $i=0,1$. 
	
	For every $n$, select a sequence $\langle S(n,m),u^n_{m}\rangle$, for $m\in \bbN$, that enumerates all pairs such that $S(n,m)\subseteq \bbN$, with $|S(n,m)|=n+(4^{n+1}-1)/3$ and $u^n_{m}\colon S(n,m)\to \bbN$. Let, for all $m$ and $i=0,1$, 
	\begin{multline*}
		U^n_{m,i}=\{g\in \cF\mid S(n,m)\subseteq \dom(g),\ 
		g(l)\in V(l,u^n_m(l),i) \text{ for all }l\in S(n,m)\}. 
	\end{multline*}
It is then easily checked that
	\[\label{eq.4n-1/3}
		\{(g_0,g_1)\in \cF^2\mid |\Diff_\varepsilon(g_0,g_1)|\geq n+(4^{n+1}-1)/3\} =\bigcup_m U^n_{m,0}\times U^n_{m,1}. 
	 \] 
	The relevant choices of $W^n_m$ and $\cW^n$ are now given by:
	\[
	W^n_m=U^n_{m,0}\times U^n_{m,1}\cup U^n_{m,1}\times U^n_{m,0} \text{ and }\cW^n=\{W^n_m\mid m\in \bbN\}.\qedhere
	\]
\end{proof}

\bibliographystyle{amsplain}
\bibliography{Bibliography_triv}
\end{document}